\documentclass[12pt,reqno]{amsart}
\usepackage{amssymb,latexsym}
\usepackage{color}
\usepackage{graphicx}
\usepackage[mathcal]{euscript}
\usepackage{mathrsfs}
\usepackage{enumerate}
\usepackage{hyperref}


\newtheorem{thm}{Theorem}[section]
\newtheorem{cor}[thm]{Corollary}
\newtheorem{lem}[thm]{Lemma}

\newtheorem{prop}[thm]{Proposition}



\theoremstyle{definition}
\newtheorem{defn}[thm]{Definition}
\newtheorem{rem}[thm]{Remark}
\newtheorem{exmp}[thm]{Example}



\numberwithin{equation}{section}
\allowdisplaybreaks


\begin{document}

\title{Slant H-Toeplitz Operators on the Hardy space}

\author[\textbf{A. Gupta} and \textbf{S. K. Singh}]{\textbf{Anuradha Gupta} and \textbf{Shivam Kumar Singh}}
\address{Anuradha Gupta, Department of Mathematics, Delhi college of Arts and Commerce , University of Delhi, Delhi, India}\email{dishna2@yahoo.in}
\address{Shivam kumar Singh, Department of Mathematics, University of Delhi, New Delhi 110007, India}\email{shivamkumarsingh14@gmail.com}

\begin{abstract}
The notion of slant H-Toeplitz operator $V_\phi$ on the Hardy space $H^2$ is introduced and its  characterizations are obtained. We have shown that an operator on the space $H^2$ is slant H-Toeplitz if and only if its matrix is a slant H-Toeplitz matrix. In addition the conditions under which  slant Toeplitz and slant Hankel operators become slant H-Toeplitz operators are also obtained. 
\end{abstract}

\subjclass[2010]{47B35}
\keywords{Toeplitz operators, Hankel operators, Slant Toeplitz operators, Slant Hankel operators, H-Toeplitz operators}
 \maketitle


\section{Introduction}
\noindent Let $\mu$ denote the normalised Lebesgue measure on the unit circle $\mathbb{T}$ and the space $L^2$ be  the space of all complex valued square integrable measurable functions on $\mathbb{T}$. The space $L^2$ is  a Hilbert space with the norm $\|\cdot\|_2$ induced by the inner product 
\[\left\langle f, g \right\rangle =  \int f\bar{g} d\mu \quad \text{ for }f,g\in L^2.\]
For each integer $n$, let $e_n(z)=z^n$ for $z \in \mathbb{T}$. Then the collection $\left\{e_n\right\}_{n \in \mathbb{Z}}$ forms an orthonormal basis for $L^2$, where $\mathbb{Z}$ denote the set of integers. 
The Hardy space $H^2= \{f\in L^2: f \text{ is analytic}\}$  being a closed subspace of $L^2$ is a Hilbert space under the norm 
\[\|f\|= \left(\sum_{n=0}^\infty|a_n|^2\right)^{1/2} < \infty \ \text{ where } f(z)= \sum_{n=0}^\infty a_nz^n.\]
The space $L^\infty$ denotes the Banach space of all essentially bounded measurable functions with norm given by $\|\phi\|_\infty = \text{ess sup}\left\{|\phi(z)|: z \in \mathbb{T}\right\}$.
Let $\mathcal{B}(L^2)$ and $\mathcal{B}(H^2)$ denote the set of all bounded linear operator on the spaces $L^2$ and $H^2$ respectively.
 Let $P$ denote the orthogonal projection from the space $L^2$ to the space $H^2$. For a given $\phi \in L^\infty$, the induced  multiplication operator $M_\phi: L^2 \longrightarrow L^2$ is defined as $M_\phi f= \phi f$ for each $f\in L^2$ and the Toeplitz operator is  the operator $T_\phi \in \mathcal{B}(H^2)$ such that $T_\phi= PM_{\phi}{|_{H^2}}$. For the symbol $\phi \in L^\infty$, Hankel operator $H_\phi \in \mathcal{B}(H^2)$ is defined as the operator $H_\phi = PM_\phi J$, where $J$, the flip operator, is the operator  $J: H^2 \longrightarrow (H^2)^\perp $  given by $J(e_n)= e_{-n-1}\text{ for } n\geq 0$.
The slant Toeplitz operator \cite{ho1, ho2, datt} with the symbol $\phi$ is defined as the operator $A_\phi \in \mathcal{B}(L^2)$ such that $A_\phi = WM_\phi$, where the operator $W$ defined on $L^2$ is given by $$W e_{n}= \begin{cases}
  e_{\frac{n}{2}} & \text{if $n$ is even} \\
  0 & \text{otherwise}
\end{cases}$$ for each integer $n$ and its adjoint is given by $W^\star e_{n}=  e_{2n}$. The compression of slant Toeplitz operator \cite{arora5} to the space $H^2$ is the operator $B_\phi$ defined by $B_\phi= P A_{\phi}{|_{H^2}}$. The slant Hankel operator \cite{arora4,dattR} on the space $H^2$ is given by $L_\phi$ such that $L_\phi= WH_\phi$.
For a non-constant analytic function $\phi$, the composition operator \cite{cowen83, clif99} is  the operator $C_\phi $ defined on $H^2$ such that $C_\phi(f)(z)=  f(\phi(z))$ for each $f \in H^2$ and $z\in \mathbb{T}$.

The study  of slant Toeplitz operators has gained voluminous importance due to its multidirectional applications as these classes of operators have played major role in wavelet analysis, dynamical system and in curve and surface modelling (\cite{goodman1994spectral, heil1996approximation,  strang1994orthogonal, strang1995short, villemoes1994wavelet}). The study of Hankel and  slant hankel operators has numerous applications, in interpolation problems, Hamburger's moment problem, rational approximation theory and stationary process.
 In 2007, Arora et al. \cite{arora7} introduced and studied the notion of H-Toeplitz operators on the space $H^2$. Motivated by these studies, we have introduced the notion of slant H-Toeplitz operators on Hardy space $H^2$  and studied their properties. We have obtained the conditions under which the slant H-Toeplitz operators are self adjoint, compact and hyponormal. Moreover, we have also obtained the characterizations for slant H-Toeplitz operators. In particular we have shown that an operator on $H^2$ is a slant H-Toeplitz operator if and only if its matrix is a slant H-Toeplitz matrix.    

\section{Slant H-Toeplitz Operators}
\noindent The H-Toeplitz operator \cite{arora7} with a symbol $\phi$ is the operator $S_\phi \in  \mathcal{B}(H^2) \text{ defined by } S_\phi(f)= PM_\phi K(f) \ \text{for all }f \in H^2$, 
 where the operator $K : H^2 \longrightarrow L^2$ is given by $K(e_{2n})= e_n$ and $K(e_{2n+1})= e_{-n-1}$ for all non-negative integers $n$. The adjoint $K^*$ of the operator $K$  is given by $K^*(e_n)= e_{2n}, \ K^*(e_{-n-1})= e_{2n+1}$ for $n\geq 0$. Thus, $K^*K = I \text{ on } H^2$ and $KK^*=I \text { on }L^2$.
\noindent Motivated by the definition H-Toeplitz operator,  we define slant H-Toeplitz operator on the space $H^2$ as follows:
\begin{defn} For $\phi \in L^\infty$, the slant H-Toeplitz operator is defined as the operator $V_\phi :  H^2 \longrightarrow H^2$ such that $V_\phi(f)= WPM_\phi K(f)$ for each $f$ in $H^2$.
\end{defn}
\noindent The operator $V_\phi$ with symbol $\phi \in L^\infty$, is a bounded linear operator as we have $
  \|V_\phi\| = \|WS_\phi\|= \|WPM_\phi K \|  \leq \|W\| \|\phi\|_\infty \|K\|  \leq \|\phi\|_\infty.$
\begin{thm} The correspondence $\phi \ \longrightarrow \ V_\phi$ is one-one.
\end{thm}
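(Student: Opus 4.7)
The map $\phi \mapsto V_\phi$ is manifestly linear, so it suffices to show that $V_\phi = 0$ forces $\phi = 0$. My plan is to test $V_\phi$ against the even-indexed basis vectors $e_{2m}$ and read off the Fourier coefficients of $\phi$ one parallel at a time.

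Write $\phi = \sum_{k \in \mathbb{Z}} a_k e_k$ in $L^2$-sense. Using $K(e_{2m}) = e_m$ for $m \geq 0$, one computes
\[
V_\phi(e_{2m}) \;=\; W P M_\phi(e_m) \;=\; W P\!\left(\sum_{k \in \mathbb{Z}} a_k e_{k+m}\right) \;=\; W\!\left(\sum_{\ell \geq 0} a_{\ell - m}\, e_\ell\right) \;=\; \sum_{j \geq 0} a_{2j - m}\, e_j,
\]
where the last equality uses the fact that $W$ kills odd indexed basis vectors and halves the even ones.

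Assuming $V_\phi = 0$, the orthonormality of $\{e_j\}_{j\geq 0}$ in $H^2$ yields $a_{2j-m} = 0$ for every $j \geq 0$ and every $m \geq 0$. Taking $m = 0, 1, 2, \ldots$ separately, this sweeps out every residue class modulo $2$ shifted arbitrarily far to the left, so $a_k = 0$ for every $k \in \mathbb{Z}$. Hence $\phi = 0$ a.e.\ on $\mathbb{T}$, which proves injectivity.

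The main work is just the bookkeeping in the displayed calculation; there is no genuine obstacle, since the presence of both $P$ (which selects the analytic part) and $W$ (which restricts to the even part) combined with varying $m$ gives enough equations to pin down every Fourier coefficient of $\phi$. Using the odd basis vectors $e_{2m+1}$, via $K(e_{2m+1}) = e_{-m-1}$, would give an alternative route but is not required.
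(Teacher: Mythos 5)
Your proposal is correct and follows essentially the same route as the paper: reduce to $V_\phi=0\Rightarrow\phi=0$ by linearity, apply $V_\phi$ to basis vectors, and read off that the Fourier coefficients $a_{2j-m}$ vanish for all $j,m\ge 0$, which exhausts $\mathbb{Z}$. The only difference is that you correctly note the even-indexed vectors $e_{2m}$ already suffice, whereas the paper (redundantly) also tests the odd-indexed vectors $e_{2m+1}$.
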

\begin{proof}
Let $ \phi(z)= \sum_{n=-\infty}^\infty a_nz^n$, $ \psi(z)= \sum_{n=-\infty}^\infty b_nz^n \in L^\infty $ be such that $V_\phi = V_\psi$. Therefore, $V_\phi- V_\psi=0$ or, equivalently, $WPM_{\phi-\psi}K=0$. That implies that 
\begin{equation}\label{eqns}
WPM_{\phi-\psi}K(e_m)= 0 \ \text{for} \ m\geq 0.
\end{equation} Therefore, in particular, we have 
$ WPM_{\phi-\psi}K(e_{2m}(z))= 0$ which gives $WP \sum _{n=-\infty}^\infty \left(a_{n}-b_{n} \right)z^{n+m} =0 $, or, $\sum _{n=0}^\infty \left(a_{2n-m}-b_{2n-m} \right)z^{n} =0 $.
 Therefore, 
$\left\langle WPM_{\phi-\psi}K(e_{2m}(z)), WPM_{\phi-\psi}K(e_{2m}(z)) \right\rangle = 0$ which implies that $\left\langle \sum _{n=0}^\infty \left(a_{2n-m}-b_{2n-m} \right)z^n , \sum _{n=0}^\infty \left(a_{2n-m}-b_{2n-m} \right)z^n  \right\rangle = 0$,
or equivalently, $\sum_{n=0}^\infty \left|a_{2n-m}- b_{2n-m}\right|^2= 0.$
Thus, it follows that $a_{2n-m}= b_{2n-m} \ \text{for } \ n,m\geq 0$. Similarly, using equation \eqref{eqns}, we get that $WPM_{\phi-\psi}K(e_{2m+1}(z))= 0$
which on using the definitions of operators $W,P$ and $K$ shows that $\sum _{n=0}^\infty \left(a_{2n+m+1}-b_{2n+m+1} \right)z^n =0$ and therefore it follows that $a_{2n+m+1}= b_{2n+m+1}$ for each $n\geq 0$ and $m\geq 0$. Hence, $a_n= b_n$ for all integers $n$ and this proves $\phi= \psi$.
\end{proof}

\noindent Let $\phi = \sum_{{n= -\infty}}^\infty a_n e_n \in L^\infty$ and $(a_{i,j})$ be the matrix of slant H-Toeplitz operator $V_\phi$ with respect to the orthonormal basis $\{e_n \}_{n\geq 0}$, where the $(i,j)^{th}$ entry, $a_{i,j}= \left\langle V_\phi e_j, e_i \right\rangle$ satisfies the following:
\begin{flalign*}
&\quad a_{k,0}= \begin{cases}
a_{k+j,4j}, & \text{for all } j\geq 0 \text{ and } k\geq 0 \\ 
a_{k-j, 4j-1}, & \text{for all } j=1,2,3,\dots , k-j \geq0 \text{ and } k\geq 1
\end{cases}& \\
\text{and }&\quad a_{0,2k}= a_{i, 2k+4i}\quad \text{ for all } i \geq 1, k\geq 1.
\end{flalign*}
Therefore, the matrix of slant H-Toeplitz operator explicitly is given by
\[V_\phi= \begin{bmatrix}\label{mainmatrix}
a_0 & a_1 & a_{-1} & a_2 & a_{-2} & a_{3} & a_{-3}\cdots\\
a_2 & a_3 & a_{1} & a_4 & a_{0} & a_{5} & a_{-1}\cdots\\
a_4 & a_5 & a_{3} & a_6 & a_{2} & a_{7} & a_{1}\cdots\\
a_6 & a_7 & a_{5} & a_8 & a_{4} & a_{9} & a_{3}\cdots\\
a_8 & a_9 & a_{7} & a_{10} & a_{6} & a_{11} & a_{5}\cdots\\
\vdots & \vdots & \vdots & \vdots & \vdots & \vdots & \vdots
\end{bmatrix}\]
which is a two way infinite matrix and it is an upper triangular matrix in the case when the symbol $\phi$ is co-analytic.
Also for each non-negative integer $n$, it follows  that\[ V_\phi(e_{2n})= WPM_\phi K(e_{2n})= WPM_\phi(e_n)= WT_\phi(e_n)= B_\phi(e_n)\]
and \begin{align*}
V_\phi(e_{2n+1})&= WPM_\phi K(e_{2n+1})= WPM_\phi (e_{-n-1}) = WPM_\phi J(e_n)\\
&=WH_\phi(e_n)= L_\phi(e_n).
\end{align*}
This shows that the matrix of slant Toeplitz operator $B_\phi$ can be obtained by deleting every odd column of the matrix of slant H-Toeplitz operator $V_\phi$ and the matrix of slant Hankel operator $L_\phi$ can be obtained by deleting every even column of the matrix  $V_\phi$. Hence, the $(i,j)^{th}$ entry of the matrix of $V_\phi$ is given by :
\[a_{i,j}= \begin{cases}
a_{2i} & \text{if } j=0 \\
a_{2i-n} & \text{if } j = 2n\\
a_{2i+n+1}& \text{if } j = 2n+1
\end{cases}\]
where $n \in \mathbb{N}$. This motivates us to define the slant H-Toeplitz matrix in the following way:

\begin{defn} A two way doubly infinite matrix $(a_{i,j})$ is said to be a slant H-Toeplitz matrix if it satisfies the following
\begin{equation}\label{matrix}
a_{k,0}= \begin{cases}
a_{k+j,4j}, & \text{for all } j\geq 0, \text { and } k\geq 0 \\ 
a_{k-j, 4j-1}, & \text{for all } j=1,2,3,\dots , k-j \geq0, \text{ and } k\geq 1
\end{cases}
\end{equation}
and
\begin{equation}\label{mmatrix}
 a_{0,2k}= a_{i, 2k+4i}, \quad \text{for all } i \geq 1\text{ and } k\geq 1.
\end{equation}
\end{defn}
The H-Toeplitz matrix give rises to slant Toeplitz and slant Hankel operators that can be seen by the following theorem.
\begin{thm}\label{A1} If a matrix of any bounded linear operator $A$ defined on $H^2$ is a slant H-Toeplitz matrix, then $AC_{z^2}$ is a slant Toeplitz operator and $AM_zC_{z^2}$ is a slant Hankel operator. 
\end{thm}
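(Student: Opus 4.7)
The plan is to read off a single doubly-infinite sequence of coefficients from the matrix of $A$, and then to check by direct calculation on the orthonormal basis $\{e_n\}_{n\geq 0}$ that $AC_{z^2}$ and $AM_zC_{z^2}$ have the matrix forms of a slant Toeplitz and a slant Hankel operator respectively. The starting observation is that $C_{z^2}e_n(z)=z^{2n}=e_{2n}(z)$ for every $n\geq 0$, so $C_{z^2}e_n=e_{2n}$ and $M_zC_{z^2}e_n=e_{2n+1}$; in other words, the $n$-th column of $AC_{z^2}$ is the $2n$-th column of $A$, and the $n$-th column of $AM_zC_{z^2}$ is the $(2n+1)$-th column of $A$.

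Next, I would invoke the explicit parameterisation of the slant H-Toeplitz matrix derived just above its definition: there exists a single doubly-infinite sequence $\{c_n\}_{n\in\mathbb{Z}}$ such that
\[a_{i,2n}=c_{2i-n}\quad\text{and}\quad a_{i,2n+1}=c_{2i+n+1}\qquad(i,n\geq 0).\]
Combined with the previous observation, this gives $\langle AC_{z^2}e_n,e_i\rangle=a_{i,2n}=c_{2i-n}$, which coincides with the $(i,n)$-entry $\langle WPM_\psi e_n,e_i\rangle=\langle \psi e_n,e_{2i}\rangle=c_{2i-n}$ of the slant Toeplitz operator $B_\psi$ with symbol $\psi=\sum_{k\in\mathbb{Z}}c_ke_k$. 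Hence $AC_{z^2}=B_\psi$ on the basis and therefore everywhere, so $AC_{z^2}$ is slant Toeplitz.

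The argument for $AM_zC_{z^2}$ is completely parallel: $\langle AM_zC_{z^2}e_n,e_i\rangle=a_{i,2n+1}=c_{2i+n+1}$, which matches the $(i,n)$-entry of $L_\psi$ with the same symbol $\psi$ (using the identity $L_\psi e_n=WH_\psi e_n$ already carried out in the excerpt, where the computation $V_\phi e_{2n+1}=L_\phi e_n$ appears). The main obstacle I anticipate is more a bookkeeping one than a conceptual one: checking that the relations (\ref{matrix})--(\ref{mmatrix}) defining a slant H-Toeplitz matrix really do collapse the two-parameter family of entries into a single sequence $\{c_n\}$, and that for a \emph{bounded} operator $A$ the resulting sequence is the Fourier sequence of some $\psi\in L^\infty$, so that $B_\psi$ and $L_\psi$ are genuine slant Toeplitz and slant Hankel operators in the sense of the paper. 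The first point follows by reading off column zero and row zero of the matrix and checking consistency on the overlapping entries via (\ref{matrix})--(\ref{mmatrix}); the second is a standard Brown--Halmos-type argument once the matrix structure is in place.
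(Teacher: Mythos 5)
Your computational core matches the paper's: the observation that $C_{z^2}e_n=e_{2n}$ and $M_zC_{z^2}e_n=e_{2n+1}$, so that $AC_{z^2}$ and $AM_zC_{z^2}$ pick out the even and odd columns of $A$, is exactly what drives the paper's proof. Where you part ways is in what you do with it. The paper does not construct any symbol at this stage: it simply verifies the matrix identities $\alpha_{i+1,j+2}=\alpha_{i,j}$ for $AC_{z^2}$ and $\beta_{i-1,j+2}=\beta_{i,j}$ for $AM_zC_{z^2}$ and then invokes the known matrix characterizations of slant Toeplitz and slant Hankel operators. You instead assemble a single symbol $\psi$ from column $0$, column $1$ and row $0$ and identify the two operators with $B_\psi$ and $L_\psi$; that is really the content of the corollary that follows the theorem, and it saddles you with the extra obligation of showing $\psi\in L^\infty$, which you leave as ``a standard Brown--Halmos-type argument.'' For the theorem as stated this obligation can be avoided entirely by arguing as the paper does (a priori the two pieces are not even required to share a symbol), so your route proves more but currently rests on an undischarged boundedness step that the paper's route outsources to the cited literature on slant Toeplitz and slant Hankel matrices.

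One concrete caution about the step you dismiss as bookkeeping: the single-sequence parameterisation $a_{i,2n}=c_{2i-n}$, $a_{i,2n+1}=c_{2i+n+1}$ is displayed in the paper for the matrix of $V_\phi$, but it is \emph{not} a formal consequence of the defining relations \eqref{matrix} and \eqref{mmatrix}. Those relations tie columns $\equiv 0,3 \pmod 4$ to column $0$ and row $0$, and only partially constrain the even columns $\geq 6$; they do not touch the columns $\equiv 1 \pmod 4$ at all (column $1$ legitimately supplies the free odd coefficients, but columns $5,9,13,\dots$ ought to be tied to it and are not), nor entries such as $a_{1,2}$ or $a_{2,6}$. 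So ``reading off column zero and row zero and checking consistency on overlaps'' cannot recover the full pattern from \eqref{matrix}--\eqref{mmatrix} alone. To be fair, this is a defect you inherit rather than introduce: the paper's own proof silently uses $a_{i+1,2j+4}=a_{i,2j}$ and $a_{i,2j+1}=a_{i-1,2j+5}$ for all indices, which likewise do not follow from the stated definition. But since your argument leans on the full parameterisation even more heavily (you need one common sequence across even and odd columns), you should either work explicitly from the displayed entrywise formula for the matrix of $V_\phi$, or state that the definition of a slant H-Toeplitz matrix must be read as encoding that formula, rather than claiming it follows from \eqref{matrix} and \eqref{mmatrix}.
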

\begin{proof}
Let $A$ be a bounded linear operator on $H^2$ such that its matrix $(a_{i,j})$ with respect to the orthonormal basis $\{e_n \}_{n\geq 0}$ is a slant H-Toeplitz matrix and therefore, it satisfies relations (\ref{matrix}),\eqref{mmatrix}. 
Let $(\alpha_{i,j})$ be the matrix of bounded linear operator $AC_{z^2}$, defined on $H^2$ with respect to the orthonormal basis $\{e_n\}_{n\geq 0}$. Then using the definition of slant H-Toeplitz matrix, we have 
\begin{flalign*}
\alpha_{i+1,j+2} &= \left\langle AC_{z^2} z^{j+2}, z^{i+1} \right\rangle = \left\langle A z^{2j+4}, z^{i+1} \right\rangle  
= a_{i+1, 2j+4} = a_{i,2j} &\\
&= \left\langle A z^{2j}, z^i \right\rangle = \left\langle AC_{z^2} z^j, z^i\right\rangle = \alpha_{i,j}  \text{ for all } i,j \geq 0.
\end{flalign*}
Therefore, $(\alpha_{i,j})$ is a slant Toeplitz matrix and hence the operator $AC_{z^2}$ is a slant Toeplitz operator.
Next, let $(\beta_{i,j})$ be the matrix of the bounded linear operator $AM_zC_{z^2}$, defined on $H^2$ with respect to the basis $\{e_n \}_{n\geq 0}$. Then, by the definition of slant H-Toeplitz matrix, it follows that
\begin{flalign*}
\beta_{i-1, j+2}&= \left\langle AM_zC_{z^2} z^{j+2}, z^{i-1} \right\rangle = \left\langle A z^{2j+5}, z^{i-1} \right\rangle 
= a_{i-1,2j+5}= a_{i,2j+1} &\\
&= \left\langle Az^{2j+1}, z^i \right\rangle = \left\langle AM_zC_{z^2} z^j, z^i \right\rangle 
= \beta_{i,j}  \text { for all } i\geq 1, j \geq 0.
\end{flalign*}
Thus, the matrix ($\beta_{i,j}$) is a slant Hankel matrix and hence the operator $AM_zC_{z^2}$ is a slant Hankel operator.
\end{proof}
\begin{cor} If the matrix of a bounded linear operator $A$ defined on $H^2$ is the slant H-Toeplitz matrix, then 
 $AC_{z^2}= WT_\phi$ and $AM_zC_{z^2}= WH_\phi$ for some $\phi \in L^\infty$.
\end{cor}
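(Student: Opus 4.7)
The plan is to invoke Theorem~\ref{A1} and then identify the slant Toeplitz and the slant Hankel symbols as a single function by reading both off the matrix $(a_{i,j})$ of $A$. By Theorem~\ref{A1} one already has $\phi_1,\psi_1\in L^\infty$ for which $AC_{z^2}$ is the slant Toeplitz operator $B_{\phi_1}$ and $AM_zC_{z^2}$ is the slant Hankel operator $L_{\psi_1}$. The paper records $B_{\phi_1}(e_n)=WT_{\phi_1}(e_n)$ just before the definition of the slant H-Toeplitz matrix, and $L_{\psi_1}=WH_{\psi_1}$ by definition, so these identifications become $AC_{z^2}=WT_{\phi_1}$ and $AM_zC_{z^2}=WH_{\psi_1}$. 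The only remaining point is to show that $\phi_1$ may be taken equal to $\psi_1$.

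To see this I would compute the matrices of $WT_\phi$ and $WH_\psi$ with respect to $\{e_n\}_{n\geq 0}$ directly: a short computation gives $\langle WT_\phi e_j,e_i\rangle=\hat\phi(2i-j)$ and $\langle WH_\psi e_j,e_i\rangle=\hat\psi(2i+j+1)$. The proof of Theorem~\ref{A1} already records that the matrix of $AC_{z^2}$ has $(i,j)$ entry $\alpha_{i,j}=a_{i,2j}$ and that of $AM_zC_{z^2}$ has $(i,j)$ entry $\beta_{i,j}=a_{i,2j+1}$. Using the relations \eqref{matrix} and \eqref{mmatrix}, these entries can be organised into a single two-sided sequence $(a_n)_{n\in\mathbb{Z}}$ with $a_{i,2j}=a_{2i-j}$ and $a_{i,2j+1}=a_{2i+j+1}$; hence $\hat\phi_1(n)=a_n$ for every $n\in\mathbb{Z}$ while $\hat\psi_1(n)=a_n$ for every $n\geq 1$.

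Thus $\hat\phi_1(n)=\hat\psi_1(n)$ for all $n\geq 1$. Since $H_\phi(e_j)=\sum_{m\geq 0}\hat\phi(m+j+1)e_m$ depends only on the Fourier coefficients of $\phi$ of index $n\geq 1$, this forces $H_{\phi_1}=H_{\psi_1}$, and consequently $WH_{\phi_1}=WH_{\psi_1}=AM_zC_{z^2}$. Choosing $\phi=\phi_1$ then satisfies both identities of the corollary simultaneously. The main point that requires careful bookkeeping is the passage from the two-index matrix entries $a_{i,2j}$ and $a_{i,2j+1}$ to a consistent two-sided Fourier sequence $(a_n)$; once that identification is made, the conclusion is immediate from Theorem~\ref{A1} together with the standard descriptions of $T_\phi$ and $H_\phi$.
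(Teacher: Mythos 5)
Your proposal is correct in outline and follows essentially the same route as the paper: invoke Theorem~\ref{A1}, then read a single symbol off the matrix of $A$. The genuine difference is the merging device. The paper writes down $\phi$ explicitly through the Fourier-coefficient formula \eqref{phi} (even positive coefficients from column $0$, odd positive ones from column $1$, negative ones from the top row) and then asserts both identities, whereas you keep the two symbols $\phi_1,\psi_1$ supplied by Theorem~\ref{A1} and reconcile them by the observation that $H_\psi$ depends only on $\hat\psi(n)$ for $n\ge 1$, so agreement on positive frequencies already gives $WH_{\phi_1}=WH_{\psi_1}=AM_zC_{z^2}$; your entry formulas $\langle WT_\phi e_j,e_i\rangle=\hat\phi(2i-j)$ and $\langle WH_\psi e_j,e_i\rangle=\hat\psi(2i+j+1)$ are correct. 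A side benefit of your route is that $\phi=\phi_1$ is automatically in $L^\infty$, a point the paper's construction of $\phi$ by its Fourier coefficients never addresses.

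The step you defer as ``careful bookkeeping'' is, however, exactly where the substance lies, and as stated it is not a consequence of the definition: relations \eqref{matrix} and \eqref{mmatrix} alone do not force the pattern $a_{i,2j}=a_{2i-j}$, $a_{i,2j+1}=a_{2i+j+1}$. They impose no condition whatsoever on columns congruent to $1 \pmod 4$, and they constrain only part of the columns congruent to $2 \pmod 4$; consequently the identity you need for odd indices, e.g. $\hat\psi_1(2k+1)=a_{k,1}$ versus $\hat\phi_1(2k+1)=a_{k+1,2}$, cannot be derived from \eqref{matrix}--\eqref{mmatrix} (only the even positive coefficients are forced to agree, via $a_{k,0}=a_{k-1,3}$). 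You should be aware that the paper's own proof makes the same silent leap (it cites \eqref{matrix} for $a_{k,1}=a_{k+1,2}$, which does not follow from it), so your argument is at the same level of rigour as the printed one; to make either airtight one must either build the full pattern of the $V_\phi$ matrix into the definition of a slant H-Toeplitz matrix or obtain the missing identities from the operator relations as in Theorem~\ref{chract2}.
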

\begin{proof}
Let $A \in \mathcal{B}(H^2)$ be such that its matrix $(a_{i,j})$ with respect to orthonormal basis $\{e_n \}_{n\geq 0}$ is a slant H-Toeplitz matrix. Therefore, by Theorem \ref{A1}, the operators $AC_{z^2}$ and $AM_zC_{z^2}$ are slant Toeplitz operator and slant Hankel operator, respectively. Let $(\alpha_{i,j}) \text{ and } (\beta_{i,j})$ be the matrices of the operators $AC_{z^2} \text{ and } AM_zC_{z^2}$, respectively, with respect to standard basis $\{e_n \}_{n\geq 0}$. Then by the definition of slant H-Toeplitz matrix, it follows that
\[\alpha _{k,0}= \left\langle WT_\phi z^0, z^k \right\rangle = \left\langle AC_{z^2}z^0, z^k\right\rangle = \left\langle Az^0, z^k\right\rangle = a_{k,0}\]
and \[\alpha_{0,j}= \left\langle WT_\phi z^j, z^0 \right\rangle = \left\langle AC_{z^2}z^j, z^0 \right\rangle= \left\langle A z^{2j}, z^0\right\rangle = a_{0,2j}.\]
Thus, for all $k \geq 0, j \geq 1$, it follows that $\alpha_{k,0}= \left\langle \phi, z^{2k} \right\rangle = a_{k,0}$  and $\alpha_{0,j}= \left\langle \phi, z^{2j}\right\rangle = a_{0,2j}$.
Also by equation \eqref{matrix}, $\alpha_{i,j}= \left\langle AC_{z^2} z^j, z^i \right\rangle= \left\langle Az^{2j}, z^i \right\rangle= a_{i,2j}= a_{2i-j,0}= \alpha_{2i-j}.$ Now since $AM_zC_{z^2}= WH_{{\phi}}$, therefore, for $k\geq 0$ and by equation \eqref{matrix} it follows that
\[\beta_{k,0}= \left\langle WH_{{\phi}}z^0, z^k \right\rangle= \left\langle AM_zC_{z^2}z^0,z^k \right\rangle= \left\langle Az, z^k\right\rangle= a_{k,1}= a_{k+1,2}.\]
Also, $\left\langle WH_{{\phi}} z^0,z^k\right\rangle= \left\langle PM_{{\phi}}J z^0, z^{2k} \right\rangle= \left\langle M_{{\phi}} z^{-1}, z^{2k} \right\rangle= \left\langle {\phi}, z^{2k+1} \right\rangle$ and so $\left\langle {\phi}, z^{2k+1} \right\rangle$= $a_{k,1}$. Since, $\left\langle \phi, z^{2k} \right\rangle = a_{k,0}$ and $\left\langle {\phi}, z^{2k+1} \right\rangle$= $a_{k,1}$, therefore we define the function $\phi$ as follows  
\begin{equation}\label{phi}
\left\langle \phi, z^{k} \right\rangle = \begin{cases}
a_{k/2,0}, &  k\geq 0 \text{ and $k$ is even} \\
a_{(k-1)/2,1} & k\geq 0 \text{ and $k$ is odd}\\
a_{0,-2k}, &  k\leq -1. 
\end{cases}
\end{equation}
Hence, the operator $AC_{z^2}$ is a slant Toeplitz operator $B_\phi$ and the operator $AM_zC_{z^2}$ is a slant Hankel operator $S_\phi$ with $\phi$ defined by \eqref{phi}. Also, $\beta_{i,j}= \left\langle AM_zC_{z^2} z^j, z^i\right\rangle= \left\langle Az^{2j+1}, z^i\right\rangle= a_{i,2j+1}= a_{i-1, 2j+5}= \left\langle AM_zC_{z^2} z^{j+2}, z^{i-1}\right\rangle= \beta_{i-1,j+2}$ for $i\geq 1, j\geq 0$. 
\end{proof}

\begin{rem}
From the matrix representation, given by the relations \eqref{matrix} and \eqref{mmatrix} of slant H-Toeplitz operator $V_\phi$ with symbol $\phi \in L^\infty$, it follows that  with respect to a suitable basis on the domain and range spaces for the operator $V_\phi$, the matrix of  $V_\phi$ can be represented as  the matrix whose columns on the left side are of the matrix of $B_\phi$ and the columns on the right side are of the matrix of $L_\phi$.  
Therefore, with respect to above representation, we can conclude that any slant H-Toeplitz operator is unitarily equivalent to a direct sum of a slant Toeplitz operator and a slant Hankel operator.
\end{rem}

 It can be observed that, for $\phi \in L^\infty$, the adjoint of the operator $V_\phi$ is the operator $V_\phi^*$ satisfying $$V_\phi^* = \left(WPM_\phi K\right)^*= K^*M_\phi^*P^*W^*=K^* M_{\bar{\phi}}W^*.$$
Then, for $i,j \geq 0$ and for $\phi = \sum _{n= - \infty}^\infty a_ne_n \in L^\infty$, we have
\begin{flalign*}
\left\langle V_\phi^* e_j, e_i \right\rangle &= \left\langle K^* M_{\bar{\phi}}PW^*e_j, e_i \right\rangle = \left\langle K^*M_{\bar{\phi}}e_{2j}, e_i \right\rangle &\\
&= \Big\langle K^*\Big(\sum_{n= -\infty}^\infty \overline{a_n}e_{2j-n}, e_i \Big)\Big\rangle =\sum_{n= -\infty}^\infty \overline{a_n}\left\langle e_{2j-n}, e_i\right\rangle &\\
&= \overline{a_{2j-i}}
\end{flalign*}
 and hence the matrix of $V_\phi^*$ with respect to orthonormal basis $\{e_n\}_{n\geq 0}$ is given by
\[V_\phi^*= \begin{bmatrix}
\overline{a_0} & \overline{a_2} & \overline{a_{4}} & \overline{a_6} & \overline{a_{8}} & \overline{a_{10}} & \overline{a_{12}}\cdots\\
\overline{a_{-1}} & \overline{a_1} & \overline{a_{3}} & \overline{a_5} & \overline{a_{7}} & \overline{a_{9}} & \overline{a_{11}}\cdots\\
\overline{a_{-2}} & \overline{a_0} & \overline{a_{2}} & \overline{a_4} & \overline{a_{6}} & \overline{a_{8}} & \overline{a_{10}}\cdots\\
\overline{a_{-3}} & \overline{a_{-1}} & \overline{a_{1}} & \overline{a_3} & \overline{a_{5}} & \overline{a_{7}} & \overline{a_{9}}\cdots\\
\overline{a_{-4}} & \overline{a_{-2}} & \overline{a_{0}} & \overline{a_2} & \overline{a_{4}} & \overline{a_{6}} & \overline{a_{8}}\cdots\\
\vdots & \vdots & \vdots & \vdots & \vdots & \vdots & \vdots
\end{bmatrix}.\]
Also, we have $\|V_\phi^* f\|^2 = \|(WT_\phi)^*f\|^2 \ + \ \|(WH_\phi)^*f\|^2$ for each $f \in H^2$. 
\begin{prop} If $\phi \in L^\infty$ is an inner function, then the operator $V_\phi^*$ is an isometry. 
\end{prop}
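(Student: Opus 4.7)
The plan is to show $V_\phi^*$ is an isometry by establishing the operator identity $V_\phi V_\phi^* = I_{H^2}$, which is equivalent to $\|V_\phi^* f\|^2 = \|f\|^2$ for all $f \in H^2$. This equivalence is the standard reformulation, and it has the advantage of letting me work algebraically with the factorizations $V_\phi = WPM_\phi K$ and $V_\phi^* = K^* M_{\bar\phi} W^*$ already derived in the excerpt.

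First I would compose the two factorizations and obtain $V_\phi V_\phi^* = WPM_\phi (KK^*) M_{\bar\phi} W^*$. The crucial identity $KK^* = I$ on $L^2$, recorded at the start of the section, collapses the middle factor, leaving $WPM_\phi M_{\bar\phi} W^* = WPM_{|\phi|^2}W^*$. This is the step where the structural compatibility between $K$ and $W$ actually matters, and it is essentially the whole content of the argument.

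Next I would use the inner function hypothesis: $|\phi|=1$ almost everywhere on $\mathbb{T}$, so $M_{|\phi|^2} = I$ on $L^2$. This reduces the expression to $WPW^*$. To handle the projection, I would note that $W^*$ sends the basis vector $e_n$ ($n\ge 0$) to $e_{2n}$, hence maps $H^2$ into $H^2$; consequently $PW^* f = W^* f$ for every $f \in H^2$. The final computation $WW^* e_n = W e_{2n} = e_n$ shows $WW^* = I$, completing the argument.

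I do not anticipate any genuine obstacle here; the proof is essentially a telescoping of two identities ($KK^* = I$ on $L^2$ and $WW^* = I$) glued together by the hypothesis $|\phi|^2 \equiv 1$. The only point requiring a moment's care is the restriction to $H^2$ so that the projection $P$ can be discarded, which is the reason the statement speaks of an isometry from $H^2$ to itself rather than a two-sided inverse of $V_\phi$.
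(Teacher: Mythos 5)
Your proof is correct and takes essentially the same route as the paper's: both establish $V_\phi V_\phi^* = I$ on $H^2$ by collapsing $KK^* = I$ on $L^2$, invoking $|\phi|^2 = 1$ a.e.\ to get $M_{|\phi|^2} = I$, and finishing with $WPW^* = WW^* = I$ on $H^2$. The only cosmetic difference is that you argue at the operator level (with the explicit remark that $W^*$ maps $H^2$ into $H^2$, so $P$ can be dropped), whereas the paper verifies the same identity on the basis vectors $e_n$.
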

\begin{proof} If $\phi\in L^\infty$ is an inner function, then $|\phi|=1$ a.e. on $\mathbb{T}$. Then, for each non-negative integer $n$ and by the definitions of operators $ V_{\phi}$ and $ V_{\phi}^*$ it follows that
$V_\phi V_\phi^* (e_n) = \left(WPM_\phi K\right)\left(K^*M_{\bar{\phi}}PW^*\right)(e_n) 
= WPM_\phi (KK^*)M_{\bar{\phi}}(e_{2n})
= WPM_{|\phi|^2}(e_{2n}) = WW^*(e_n)= (e_n)$.
Thus, $V_\phi V_\phi^* (e_n)= (e_n)$ for all $n\geq 0$. Hence, the operator  $V_\phi^*$ is an isometry.
\end{proof}
\noindent The condition in the above theorem is only necessary but not sufficient as shown by the following example.
\begin{exmp} For $\phi(z)= ({1+z})/{\sqrt{2}}$, we have 
\begin{flalign*}
V_\phi V_\phi^* (e_n(z)) &= WP M_\phi M_{\bar{\phi}}(z^n) = \frac{1}{\sqrt{2}} W P \left(\frac{1+z}{\sqrt{2}} \left(z^{2n}+ z^{2n-1} \right)  \right)&\\
& = \frac{1}{2} WP \left(z^{2n}+z^{2n-1}+z^{2n+1}+ z^{2n} \right) = z^n.
\end{flalign*}
Therefore, $V_\phi V_\phi^* (e_n) = e_n$ for each non-negative integer $n$ and hence $V_\phi^*$ is an isometry, but  $\phi$ is not an inner function. 
\end{exmp}
\begin{thm} Let $\phi = \sum\limits _{n=-\infty}^\infty a_n e_n \in L^\infty$ and the operator $V_\phi^*$ is an isometry on $H^2$. Then  $\sum\limits _{n=-\infty}^\infty |a_n|^2 =1$.
\end{thm}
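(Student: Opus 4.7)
My approach is to test the isometry condition on the single vector $e_0$. The operator $K$ is (essentially by construction) a unitary from $H^2$ onto $L^2$, and this fact will convert the $H^2$-norm of $V_\phi^{*}e_0$ directly into the $L^2$-norm of $\phi$, yielding exactly the asserted sum.

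Since $V_\phi^{*}$ is an isometry, $\|V_\phi^{*}e_0\|_{H^2}=\|e_0\|_{H^2}=1$, so it suffices to show $\|V_\phi^{*}e_0\|_{H^2}^{2}=\sum_{n=-\infty}^{\infty}|a_n|^{2}$. Using the formula $V_\phi^{*}=K^{*}M_{\bar\phi}PW^{*}$ recorded just before the proposition, together with $W^{*}e_0=e_0\in H^{2}$ (so $P$ acts as the identity there) and $M_{\bar\phi}e_0=\bar\phi$, I obtain $V_\phi^{*}e_0=K^{*}\bar\phi$.

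The remaining step is the observation that $K^{*}\colon L^{2}\to H^{2}$ is norm-preserving. This is immediate from the identity $KK^{*}=I$ on $L^{2}$ recalled at the start of Section~2: for every $g\in L^{2}$,
\[
\|K^{*}g\|_{H^2}^{2}=\langle K^{*}g,\,K^{*}g\rangle=\langle KK^{*}g,\,g\rangle=\|g\|_{L^2}^{2}.
\]
Applying this with $g=\bar\phi$ and using $\|\bar\phi\|_{L^2}^{2}=\|\phi\|_{L^2}^{2}=\sum_{n}|a_n|^{2}$ gives $\|V_\phi^{*}e_0\|^{2}=\sum_{n}|a_n|^{2}$, and comparing with $\|V_\phi^{*}e_0\|=1$ completes the argument.

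No serious obstacle arises; the only ingredients used are the factorisation of $V_\phi^{*}$ and the already-recorded identity $KK^{*}=I$. In fact, the same calculation yields $V_\phi^{*}e_j=K^{*}(\bar\phi\, e_{2j})$ for every $j\ge 0$, and since multiplication by the unimodular character $e_{2j}$ preserves $L^2$-norms, one has $\|V_\phi^{*}e_j\|^{2}=\|\phi\|_{L^2}^{2}$ for each $j$, so the isometry hypothesis is actually enforced at a single basis vector.
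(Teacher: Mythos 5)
Your proof is correct and follows essentially the same route as the paper: the paper likewise tests the isometry identity $V_\phi V_\phi^{*}=I$ against basis vectors (via $\langle V_\phi V_\phi^{*}e_m,e_m\rangle=\|V_\phi^{*}e_m\|^{2}=1$) and uses $KK^{*}=I$ to reduce this to $\int_{\mathbb{T}}|\phi|^{2}\,d\mu=\sum_{n}|a_n|^{2}=1$. Your invocation of the norm-preservation of $K^{*}$ merely streamlines the Fourier-expansion step in the paper's computation, and your closing observation that a single basis vector already forces the conclusion is accurate.
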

\begin{proof}
If the operator $V_\phi^*$ is an isometry, then $ V_\phi V_\phi^* =I$ which from the definition of the operator $V_\phi$ implies that $WT_{|\phi|^2}W^* = WW^*$ or equivalently,  $W\left(I-T_{|\phi|^2}\right)W^* =0$ 
 \text{ and therefore we get that } $WT_{1-|\phi|^2}W^* = 0.$ Thus, for $m\geq 0$, it follows that 
$\left\langle WT_{1-|\phi|^2}W^* e_m, e_m \right\rangle = 0$,  
 that is, $\left\langle (T_{1-|\phi|^2}) e_{2m}, e_{2m} \right\rangle =0$\text{ which gives } $\left\langle (1-|\phi|^2)e_{2m}, e_{2m} \right\rangle =0$
 and so $\left\langle z^{2m}, z^{2m}\right\rangle - \left\langle \phi(z) \overline{\phi(z)}z^{2m}, z^{2m}\right\rangle =0.$ Therefore, on substituting the value of $\phi$, we get that $\left\langle \sum_{n=-\infty}^\infty a_n z^{n+2m}, \sum_{k=-\infty}^\infty a_k z^{k+2m}  \right\rangle =1$ or, equivalently, we have $\sum_{n=-\infty}^\infty a_n  \sum_{k=-\infty}^\infty \overline{a_k} \left\langle z^{n+2m}, z^{k+2m} \right\rangle =1 $. Hence, it follows that $ \sum_{n=-\infty}^\infty |a_n|^2 =1$.
\end{proof}

\noindent If $\phi \in L^\infty$ is an inner function, then the operator $V_\phi$ being coisometry is also a partial isometry and the converse follows from the following theorem.
\begin{thm}If $\phi \in L^\infty$ and the operator $V_\phi$ is partial isometry, then \[\left(WT_{1-|\phi|^2}W^*\right)WT_\phi K =0.\]
\end{thm}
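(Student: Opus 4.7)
The plan is to exploit the defining relation of a partial isometry, namely $V_\phi V_\phi^* V_\phi = V_\phi$, together with the explicit formula for $V_\phi V_\phi^*$ that was essentially already extracted in the proof of the preceding proposition. The first step is to verify (or simply invoke) that $V_\phi V_\phi^* = W T_{|\phi|^2} W^*$ on $H^2$: using $V_\phi^* = K^* M_{\bar{\phi}} P W^*$, the identities $P W^* = W^*$ on $H^2$ (since $W^* e_n = e_{2n}$ is already analytic) and $K K^* = I$ on $L^2$ collapse the product $V_\phi V_\phi^*$ down to $W P M_{|\phi|^2} W^* = W T_{|\phi|^2} W^*$, exactly as in the calculation immediately preceding the statement.

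The next step is a purely linear manipulation. Since $T_{1-|\phi|^2} = I - T_{|\phi|^2}$ on $H^2$ by linearity of the symbol, and since $W W^* = I$ on $H^2$ (because $W W^* e_n = W e_{2n} = e_n$), we may rewrite
\[W T_{1-|\phi|^2} W^* = W W^* - W T_{|\phi|^2} W^* = I - V_\phi V_\phi^*.\]
Interpreting $W T_\phi K$ as the composition $W P M_\phi K = V_\phi$ (reading $T_\phi$ as $P M_\phi$ so the formula makes sense on the $L^2$-valued range of $K$, in the same spirit in which $V_\phi$ itself is written in the paper), what is left is the direct computation
\[(W T_{1-|\phi|^2} W^*)\, W T_\phi K = (I - V_\phi V_\phi^*)\, V_\phi = V_\phi - V_\phi V_\phi^* V_\phi = 0,\]
where the last equality is precisely the partial isometry hypothesis.

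I do not anticipate a real obstacle. Once $V_\phi V_\phi^*$ has been put into the shape $W T_{|\phi|^2} W^*$, the conclusion is a one-line consequence of $V_\phi V_\phi^* V_\phi = V_\phi$. The only mild point of care is the notational convention that allows $T_\phi K$ to be composed even though $K$ lands in $L^2$ rather than $H^2$; as throughout the paper, one reads $T_\phi$ as $P M_\phi$ so that the factorisation $V_\phi = W T_\phi K$ is valid and the argument above goes through unchanged.
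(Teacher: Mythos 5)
Your proposal is correct and follows essentially the same route as the paper: both identify $V_\phi V_\phi^* = WT_{|\phi|^2}W^*$, rewrite $WT_{1-|\phi|^2}W^*$ as $I - V_\phi V_\phi^*$ using linearity of the symbol (the paper writes $I = WPM_1W^*$), and then apply the partial isometry identity $V_\phi V_\phi^* V_\phi = V_\phi$ to the factorisation $V_\phi = WPM_\phi K = WT_\phi K$. Your explicit remark on reading $T_\phi$ as $PM_\phi$ so that the composition with $K$ makes sense matches the paper's implicit convention.
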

\begin{proof}
Let the operator $V_\phi$ be a partial isometry on $H^2$. Therefore, $
 \quad V_\phi = V_\phi V_\phi^* V_\phi,   
\text{ that is, } V_\phi= \left( WT_{|\phi|^2}W^* \right)V_\phi$
\text{ which further implies that } $\left( I- WT_{|\phi|^2}W^* \right)V_\phi =0$. \text{ So, } $\left( WPM_1W^*- WT_{|\phi|^2}W^* \right)WPM_\phi K = 0, 
 \text{ or equivalently, } \left(WT_{1-|\phi|^2} W^* \right)WPM_\phi K=0$ 
 and hence it follows that $\left(WT_{1-|\phi|^2}W^*\right)WT_\phi K =0.$
\end{proof}
\begin{thm}
 For $\phi \in L^\infty$, the operator $V_\phi$ is a Hilbert-Schmidt operator if and only if $\phi=0$.
\end{thm}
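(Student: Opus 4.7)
The reverse direction is immediate: $V_0 = 0$ is trivially Hilbert--Schmidt. For the forward direction, the plan is to compute $\|V_\phi\|_{HS}^2$ directly from the matrix entries derived in the excerpt and exploit the fact that each Fourier coefficient of $\phi$ appears infinitely many times in the matrix of $V_\phi$.

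Writing $\phi = \sum_{n\in\mathbb{Z}} a_n e_n$, the identities $V_\phi e_{2n} = B_\phi e_n$ and $V_\phi e_{2n+1} = L_\phi e_n$ established in the excerpt give
\begin{equation*}
\|V_\phi\|_{HS}^2 = \sum_{j\geq 0}\|V_\phi e_j\|^2 = \sum_{n\geq 0}\|B_\phi e_n\|^2 + \sum_{n\geq 0}\|L_\phi e_n\|^2.
\end{equation*}
Since all terms are non-negative, it suffices to show that the first sum alone forces $\phi = 0$. Using the formula $\langle V_\phi e_{2n}, e_i\rangle = a_{2i-n}$ for the matrix entries, this reduces to
\begin{equation*}
\sum_{n\geq 0}\|B_\phi e_n\|^2 = \sum_{n\geq 0}\sum_{i\geq 0}|a_{2i-n}|^2.
\end{equation*}

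The key step is a simple counting observation: for each fixed $k\in\mathbb{Z}$, the equation $2i-n = k$ has infinitely many non-negative integer solutions $(i,n)$. Indeed, when $k\geq 0$, every $i\geq \lceil k/2\rceil$ yields a valid $n = 2i-k \geq 0$, and when $k<0$, every $i\geq 0$ yields a valid $n = 2i-k \geq -k > 0$. Regrouping the non-negative double sum by the value $k = 2i-n$ (permissible by Tonelli) therefore expresses it as $\sum_{k\in\mathbb{Z}} |a_k|^2 \cdot (\text{infinite multiplicity})$, which is finite if and only if $a_k = 0$ for every $k\in\mathbb{Z}$. Hence $\phi = 0$.

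The only subtlety is justifying the rearrangement and observing that an infinite multiplicity of a nonzero non-negative term forces divergence; both are routine. An alternative route would invoke the earlier remark that $V_\phi$ is unitarily equivalent to $B_\phi \oplus L_\phi$ and apply the analogous Hilbert--Schmidt characterisation for slant Toeplitz operators, but the direct matrix count above bypasses the need for any auxiliary result.
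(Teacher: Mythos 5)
Your proof is correct and follows essentially the same route as the paper: both expand $\|V_\phi\|_{\mathrm{HS}}^2 = \sum_m \|V_\phi e_m\|^2$ over the even and odd columns of the matrix of $V_\phi$, reduce to double sums of the form $\sum_{n,i}|a_{2i-n}|^2$, and conclude that finiteness forces every Fourier coefficient of $\phi$ to vanish. Your write-up is marginally tighter in that you make the infinite-multiplicity count for $2i-n=k$ explicit (the paper merely asserts the divergence) and observe that the even, slant-Toeplitz columns alone already suffice, but the underlying argument is the same.
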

\begin{proof}
Clearly if $\phi=0$, then the operator $V_\phi$ is a Hilbert-Schmidt operator. Let $\phi= \sum\limits_{n=-\infty}^\infty a_ne_n \in L^\infty$ and the operator $V_\phi$ be a Hilbert-Schmidt operator.  From the definitions of the operators $W$ and $K$, we see that
\begin{flalign*}
&\sum_{m=0}^\infty  \left\langle V_\phi e_m, V_\phi e_m\right\rangle = \sum_{m=0}^\infty  \left\langle V_\phi e_{2m}, V_\phi e_{2m}\right\rangle + \sum_{m=0}^\infty  \left\langle V_\phi e_{2m+1}, V_\phi e_{2m+1}\right\rangle &\\
&= \sum_{m=0}^\infty  \left\langle WPM_\phi e_m, WPM_\phi e_m\right\rangle + \sum_{m=0}^\infty  \left\langle WPM_\phi e_{-m-1}, WPM_\phi e_{-m-1}\right\rangle &\\
&= \sum_{m=0}^\infty  \Big\langle WP \sum_{n=-\infty}^\infty a_ne_{n+m}, WP \sum_{n=-\infty}^\infty a_ne_{n+m}\Big\rangle &\\
& \qquad +\sum_{m=0}^\infty  \Big\langle WP \sum_{n=-\infty}^\infty a_ne_{n-m-1}, WP \sum_{n=-\infty}^\infty a_ne_{n-m-1}\Big\rangle &\\
&= \sum_{m=0}^\infty  \Big\langle \sum_{n=0}^\infty a_{2n-m}e_{n} , \sum_{j=0}^\infty a_{2j-m}e_{j}\Big\rangle + \sum_{m=0}^\infty\Big\langle \sum_{n=0}^\infty a_{2n+m+1}e_{n} , \sum_{j=0}^\infty a_{2j+m+1}e_{j}\Big\rangle &\\
&= \sum_{m=0}^\infty \Big(\sum_{n=0}^\infty |a_{2n-m}|^2\Big) + \sum_{m=0}^\infty \Big(\sum_{n=0}^\infty |a_{2n+m+1}|^2\Big). 
\end{flalign*}
Since the operator $V_\phi$ is Hilbert-Schmidt, therefore it follows that
$ \sum_{m=0}^\infty \|V_\phi e_m\|^2 = \sum_{m=0}^\infty  \left\langle V_\phi e_m, V_\phi e_m\right\rangle < \infty $. 
Hence, this implies that \[\sum_{m=0}^\infty \Big(\sum_{n=0}^\infty |a_{2n-m}|^2\Big) + \sum_{m=0}^\infty \Big(\sum_{n=0}^\infty |a_{2n+m+1}|^2\Big) < \infty \] and its if and only if $|a_n|=0 $ for each $n$. Thus, $\phi =0$.
\end{proof}
\noindent It is known that the only compact slant Toelitz operators on the Hardy space is the zero operator \cite{arora5}. Following theorem shows that the same holds true for slant H-Toeplitz operators.

\begin{thm}The slant H-Toeplitz operator $V_\phi$ is compact if and only if $\phi=0$.
\end{thm}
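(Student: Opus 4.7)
The forward direction is immediate: if $\phi=0$, then $V_\phi=0$, which is a compact operator. For the converse, the plan is to reduce compactness of $V_\phi$ on $H^2$ to compactness of the slant Toeplitz operator $B_\phi$, and then invoke the cited result from \cite{arora5} that the only compact slant Toeplitz operator on $H^2$ is the zero operator.

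The key observation, already recorded earlier in the excerpt, is $V_\phi(e_{2n}) = B_\phi(e_n)$ for every $n\geq 0$. This can be rephrased operator-theoretically as $B_\phi = V_\phi \circ U$, where $U: H^2 \to H^2$ is the isometry defined by $U(e_n)=e_{2n}$. Since compactness is preserved under composition with a bounded operator, $V_\phi$ being compact forces $B_\phi$ to be compact. Hence by \cite{arora5}, $B_\phi = 0$.

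To finish, I would unwind the identity $B_\phi(e_n) = WPM_\phi(e_n) = \sum_{\ell \geq 0} a_{2\ell - n}\, e_\ell$, which is essentially the first half of the computation already performed in the preceding Hilbert-Schmidt theorem. Setting this equal to zero for every $n \geq 0$ forces $a_{2\ell-n}=0$ for all $\ell, n \geq 0$; since $\{2\ell-n : \ell, n \geq 0\} = \mathbb{Z}$, every Fourier coefficient of $\phi$ vanishes, so $\phi = 0$.

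I do not foresee any serious obstacle: the main verification is the identity $B_\phi = V_\phi \circ U$, which is nothing more than a restatement of $V_\phi e_{2n} = B_\phi e_n$. A fully self-contained alternative that avoids citing \cite{arora5} would go as follows: since $e_m \rightharpoonup 0$ weakly in $H^2$ and $V_\phi$ is compact, $\|V_\phi e_{2m}\|^2 = \sum_{\ell \geq 0} |a_{2\ell-m}|^2 \to 0$; sending $m \to \infty$ through even and then through odd values, and invoking dominated convergence with dominating sum $\sum_j |a_j|^2 \leq \|\phi\|_\infty^2 < \infty$, one obtains $\sum_{j \text{ even}}|a_j|^2 = 0 = \sum_{j \text{ odd}}|a_j|^2$, hence $\phi = 0$.
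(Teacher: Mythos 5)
Your proposal is correct, but it is packaged differently from the paper's argument, and the comparison is instructive. The paper proves the converse directly: since $e_n\rightharpoonup 0$ weakly and $V_\phi$ is compact, $\|V_\phi e_n\|\to 0$; splitting into even and odd indices via $V_\phi e_{2n}=WT_\phi e_n$ and $V_\phi e_{2n+1}=WH_\phi e_n$, it concludes (rather tersely, without writing out the norms) that every Fourier coefficient of $\phi$ vanishes --- so it uses \emph{both} the slant Toeplitz and the slant Hankel halves of the matrix, and does not formally invoke \cite{arora5} inside the proof. You instead observe that the even columns alone suffice: writing $B_\phi = V_\phi C_{z^2}$ (note your $U$ with $U e_n = e_{2n}$ is exactly the composition operator $C_{z^2}$ already used in the paper; better to call it that, since $U$ denotes the forward shift in Section~3), compactness of $V_\phi$ gives compactness of $B_\phi$, the cited result of \cite{arora5} gives $B_\phi=0$, and the explicit formula $B_\phi e_n=\sum_{\ell\ge 0}a_{2\ell-n}e_\ell$ together with $\{2\ell-n:\ell,n\ge 0\}=\mathbb{Z}$ gives $\phi=0$. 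This buys economy (only the Toeplitz half is needed) at the cost of leaning on an external theorem; your self-contained variant --- computing $\|V_\phi e_{2m}\|^2=\sum_{\ell\ge 0}|a_{2\ell-m}|^2$, letting $m\to\infty$ through even and odd values and using monotone (or dominated) convergence against $\sum_j|a_j|^2\le\|\phi\|_\infty^2$ --- removes that dependence and in fact supplies precisely the quantitative step that the paper's own proof leaves implicit. Both routes are sound; yours is, if anything, more complete in its details.
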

\begin{proof}
Let $\phi$ be a bounded measurable function and $V_\phi$ be a slant H-Toeplitz operator with the matrix $(a_{i,j})$ with respect to orthonormal basis $\{e_n\}_{n\geq 0}$ satisfying relations \eqref{matrix} and \eqref{mmatrix}.
Let $V_\phi$ be a compact operator. Since $e_n$ converges to $0$ weakly and therefore $\|V_\phi e_n\| \to 0$. This implies that $\|WT_\phi e_n\| \to 0$ and $\|WH_\phi e_n\| \to 0$. This further implies that $\left\langle \phi, e_n \right\rangle = 0$ for each $n\in \mathbb{Z}$. Hence, $\phi =0$, that is, $V_\phi =0$. Thus, the only compact slant H-Toeplitz operator is the zero operator.
\end{proof}
 The slant H- Toeplitz operator $V_\phi$ is a non-normal operator, that is, $V_\phi V_\phi^* \neq V_\phi^* V_\phi$. Moreover, in the following theorem we prove that zero operator is the only hyponormal slant H-Toeplitz operator.
\begin{thm} For $\phi \in L^\infty$, the operator $V_\phi$ is hyponormal if and only if $\phi=0$.
\end{thm}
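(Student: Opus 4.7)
The ``if'' direction is trivial, so the plan is to show that hyponormality of $V_\phi$ forces $\phi=0$. The strategy is to exploit an asymmetry between $V_\phi$ and $V_\phi^{*}$ on the odd basis vectors $e_{2m+1}$: evaluating $V_\phi$ there involves only high-frequency Fourier coefficients of $\phi$ (via the slant-Hankel piece $L_\phi$), whereas evaluating $V_\phi^{*}$ there captures essentially all Fourier coefficients.

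Concretely, I would write $\phi = \sum_{n\in\mathbb Z}a_n e_n \in L^\infty\subset L^2$ and use the two matrix calculations already present in the excerpt. From $V_\phi e_{2m+1}=L_\phi e_m = \sum_{j\ge 0}a_{2j+m+1}e_j$, one reads off
\[
\|V_\phi e_{2m+1}\|^2 \;=\; \sum_{j=0}^{\infty}|a_{2j+m+1}|^{2},
\]
and from the displayed matrix of $V_\phi^{*}$ (whose $(i,2m+1)$-entry is $\overline{a_{4m+2-i}}$) one obtains
\[
\|V_\phi^{*} e_{2m+1}\|^2 \;=\; \sum_{i=0}^{\infty}|a_{4m+2-i}|^{2}\;=\; \sum_{k\le 4m+2}|a_k|^{2}.
\]
Hyponormality of $V_\phi$ applied to the vector $e_{2m+1}$ then gives the inequality $\|V_\phi^{*} e_{2m+1}\|^{2}\le \|V_\phi e_{2m+1}\|^{2}$ for every $m\ge 0$.

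The crux is now a simple limit argument: as $m\to\infty$, the indices appearing on the right-hand side are $\ge m+1$, so the right-hand side is dominated by the tail $\sum_{k\ge m+1}|a_k|^{2}$ of the convergent series $\sum_{k\in\mathbb Z}|a_k|^{2}=\|\phi\|_2^{2}<\infty$ and hence tends to $0$. On the other hand, the left-hand side is a monotonically increasing truncation of $\|\phi\|_2^{2}$ and tends to $\|\phi\|_2^{2}$. Passing to the limit therefore yields $\|\phi\|_2^{2}\le 0$, so $\phi=0$ in $L^2$ and hence in $L^\infty$.

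I do not anticipate any real obstacle; the only thing worth being careful about is justifying the two limits (which follow from monotone/dominated convergence for sums, using $\phi\in L^2$). If a more uniform argument is preferred, one can instead use the $e_{2m}$ vectors and compare the two parities separately, but the odd-index choice collapses both directions into a single passage to the limit, which I would adopt for brevity.
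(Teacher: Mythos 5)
Your strategy works and your conclusion is right, but one intermediate identity you rely on is false as an equality --- though not through your own fault: the displayed matrix of $V_\phi^*$ in the paper is not the conjugate transpose of the displayed matrix of $V_\phi$, and the entry formula $\langle V_\phi^* e_j, e_i\rangle=\overline{a_{2j-i}}$ there is erroneous. Computing directly from $V_\phi^*=K^*M_{\bar\phi}PW^*$ (or transposing the correct matrix of $V_\phi$) gives $\langle V_\phi^* e_j, e_{2n}\rangle=\overline{a_{2j-n}}$ and $\langle V_\phi^* e_j, e_{2n+1}\rangle=\overline{a_{2j+n+1}}$, so in fact $\|V_\phi^* e_j\|^2=\sum_{k\in\mathbb Z}|a_k|^2=\|\phi\|_2^2$ for \emph{every} $j\ge 0$ (as it must be: $K^*$ maps $L^2$ isometrically onto $H^2$ and $\|\bar{\phi}z^{2j}\|_2=\|\phi\|_2$). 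Your claimed value $\|V_\phi^* e_{2m+1}\|^2=\sum_{k\le 4m+2}|a_k|^2$ only records the even-indexed rows and is merely a lower bound for the true norm. Fortunately that is the direction you need: hyponormality still yields $\sum_{k\le 4m+2}|a_k|^2\le\|V_\phi e_{2m+1}\|^2\le\sum_{k\ge m+1}|a_k|^2$, and your limit argument closes the proof. With the corrected computation the argument is even shorter, with no limit needed on the left: $\|\phi\|_2^2=\|V_\phi^* e_{2m+1}\|^2\le\sum_{k\ge m+1}|a_k|^2\to 0$, so $\phi=0$. (Your formula $\|V_\phi e_{2m+1}\|^2=\sum_{j\ge0}|a_{2j+m+1}|^2$ and the tail bound are correct, and $\phi\in L^\infty(\mathbb T)\subset L^2$ justifies the convergence.)

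Once repaired, your route is genuinely different from the paper's, which uses no limiting process at all: there, hyponormality is tested only at $e_0$ and $e_1$; the $e_0$ inequality, after exact cancellation of the common terms, gives $\sum_{n\ge1}|a_{-n}|^2+\sum_{n\ge0}|a_{2n+1}|^2\le 0$, killing the negative and odd coefficients, and the $e_1$ inequality then kills the even ones. The paper's argument is finite and purely algebraic; yours exploits the uniform identity $\|V_\phi^* f\|=\|\phi\|_2\|f\|$ on basis vectors against the decaying tail seen by $V_\phi$ on high odd modes, which is more robust to bookkeeping and makes the mechanism (co-isometric-like adjoint versus compact-like tail behaviour) more transparent.
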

\begin{proof}
For $\phi=0$,  the operator $V_\phi$ is trivially hyponormal. Now let the operator $V_\phi$ be hyponormal and $\phi= \sum_{n= -\infty}^\infty a_ne_n$. Then by definition of hyponormal it follows that
\begin{equation} \label{eqns2}
 \|{V_\phi}^*f\|^2 \leq \|V_\phi f\|^2 \ \text{ for all }  f\in H^2.
\end{equation}
In particular for $f(z)=e_0(z)$ in \eqref{eqns2}, we have $\ \|V_\phi ^* e_0\|^2 \leq \|V_\phi e_0\|^2$, 
that is, $\left\|K^*\left(\sum_{n=-\infty}^\infty \overline{a_{-n}}e_n \right)\right\|^2 \leq \left\|W\sum_{n=0}^\infty a_{n}e_n\right\|^2$ and this further implies that $\left\|\sum_{n=0}^\infty \overline{a_{-n}}e_{2n} + \sum_{n=0}^\infty \overline{a_{n+1}}e_{2n+1} \right\|^2 \leq \left\|\sum_{n=0}^\infty a_{2n}e_n \right\|^2.$ 
Therefore, on expanding we have $\sum_{n=0}^\infty |a_{-n}|^2$ + $\sum_{n=0}^\infty |a_{n+1}|^2$ $\leq$ $\sum_{n=0}^\infty |a_{2n}|^2$ which implies that $\sum_{n=1}^\infty |a_{-n}|^2 + \sum_{n=0}^\infty |a_{2n+1}|^2 \leq 0 $.
So, this gives that $a_{-n}=0 \ \text{for } \ n\geq 1$ and $a_{2n+1}= 0 \ \text {for } n\geq 0$. Similarly on taking $f(z)= e_1(z)=z$ in \eqref{eqns2}, it follows that
$ \|V_\phi ^* e_1\|^2 \leq \|V_\phi e_1\|^2$. Now from the definitions of operators $V_\phi$ and $V_\phi^*$ 
we have $\left\|\sum_{n=-\infty}^\infty \overline{a_{n+3}}e_{2n+1}\right\|^2 \leq \left\|\sum_{n=0}^\infty \overline{a_{2n+1}}e_{n} \right\|^2$, or equivalently,  $\sum_{n=-\infty}^\infty |a_{n+3}|^2 \leq \sum_{n=0}^\infty |a_{2n+1}|^2$, that is, $\sum_{n=-\infty}^\infty |a_{2n}|^2 \leq 0.$
Therefore, $|a_{2n}|=0$ for each $n$ and hence we have that $a_n=0$ for each $n\in \mathbb{Z}$. Thus, it follows that $\phi =0$.
\end{proof}
\noindent It is evident that every isometry operator is hyponormal, therefore it follows that a slant H-Toeplitz operator can not be isometry.

\section{Characterizations of slant H-Toeplitz operator}
\noindent Let $U \in \mathcal{B}(H^2)$ be a forward shift operator, that is, $U(f(z))= z(f(z))$ for all $f\in H^2$, $z\in \mathbb{T}$ and the operator $U^*$ denotes the adjoint of $U$. Let the operator $\mathcal{W} \in \mathcal{B}(L^2)$ be the multiplication operator with symbol $z$.
\begin{thm}\label{t2} If $A$ is a bounded linear operator on $H^2$ whose matrix with respect to orthonormal basis 
$\{e_n\}_{n\geq 0}$ is a slant H-Toeplitz matrix, then for each non-negative integers $m$, there exist a bounded linear operator $A_m$ defined from $H^2$ to $L^2$ which satisfies the following:
\begin{enumerate}
\item[(a)]$A_mC_{z^2}= {\mathcal{W}^*}^m AC_{z^2}U^{2m}$
\item[(b)]$U^*A_mM_{z^3}C_{z^4}= AM_{z^3}C_{z^4}U$
\item[(c)]$U^*A_m z^0= AM_{z^3}z^0.$
\end{enumerate}
\end{thm}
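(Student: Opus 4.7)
The strategy is to use the preceding Corollary, which identifies any $A$ with slant H-Toeplitz matrix as $A=V_\phi$ for some $\phi\in L^\infty$, and then to construct $A_m$ explicitly on the orthonormal basis $\{e_n\}_{n\geq 0}$ by splitting $H^2$ into the three orthogonal pieces spanned by $\{e_{2n}\}$, $\{e_{4n+3}\}$, and $\{e_{4n+1}\}$. The values on the first piece are forced by (a), those on the second are the minimal choice satisfying (b), and those on the third are free.

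Concretely, I would define
\[
A_m e_{2n}:=\mathcal{W}^{*m}Ae_{2n+4m},\qquad A_m e_{4n+3}:=UAe_{4n+7},\qquad A_m e_{4n+1}:=0,
\]
for all $n\geq 0$, and extend by linearity. Boundedness of $A_m$ follows at once from this orthogonal decomposition together with $\|A_m e_{2n}\|\leq \|A\|$ and $\|A_m e_{4n+3}\|\leq \|A\|$ (the operators $U$ and $\mathcal{W}^{*m}$ being contractive or isometric).

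Conditions (a) and (b) then follow directly from the definition. For (a): $A_mC_{z^2}e_n=A_me_{2n}=\mathcal{W}^{*m}Ae_{2n+4m}=\mathcal{W}^{*m}AC_{z^2}U^{2m}e_n$. For (b), using $M_{z^3}C_{z^4}e_n=e_{4n+3}$ and $U^*U=I$, one gets $U^*A_mM_{z^3}C_{z^4}e_n=U^*UAe_{4n+7}=Ae_{4n+7}=AM_{z^3}C_{z^4}Ue_n$.

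The main work is in (c). Expanding $A_me_0=\mathcal{W}^{*m}Ae_{4m}=\sum_{i\geq 0}a_{i,4m}e_{i-m}$ and comparing with $Ae_3=\sum_{k\geq 0}a_{k,3}e_k$, I would chain the slant H-Toeplitz matrix relations: taking $j=m$ in $a_{k,0}=a_{k+j,4j}$ gives $a_{i,4m}=a_{i-m,0}$ for $i\geq m$, and taking $j=1$ in $a_{k,0}=a_{k-j,4j-1}$ gives $a_{\ell,0}=a_{\ell-1,3}$ for $\ell\geq 1$. Combining, $a_{i,4m}=a_{i-m-1,3}$ for $i\geq m+1$, so that $U^*A_me_0=\sum_{i\geq m+1}a_{i,4m}e_{i-m-1}=\sum_{k\geq 0}a_{k,3}e_k=Ae_3=AM_{z^3}z^0$, establishing (c). This chaining of the defining relations of the slant H-Toeplitz matrix---which converts the $(4m)$-th column data into the $3$rd column data by routing through the zeroth column---is the only substantive step of the proof, and is the main obstacle since it is precisely the point where the slant H-Toeplitz structure of the matrix of $A$ enters.
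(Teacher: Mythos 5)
Your construction does prove the statement as literally formulated, but by a genuinely different route from the paper. The paper defines $A_m$ globally, as the operator whose matrix extends the slant H-Toeplitz matrix of $A$ by $m$ additional rows indexed $-1,\dots,-m$ (so that $\langle A_m e_j,e_i\rangle=\langle Ae_j,e_i\rangle$ for all $i,j\ge 0$), and then reads (a), (b), (c) off the extended matrix relations; you instead build a minimal $A_m$ piecewise on the three orthogonal column families, taking the values forced by (a) on $\{e_{2n}\}$, the section $UAe_{4n+7}$ of (b) on $\{e_{4n+3}\}$, and $0$ on $\{e_{4n+1}\}$, so that (a) and (b) hold by fiat and all the content sits in (c). Your verification of (c) is correct: the chain $a_{i,4m}=a_{i-m,0}=a_{i-m-1,3}$ (the defining relations with $j=m$ and $j=1$) is exactly what is needed, and your reading of $U^*$ on a vector with negative-frequency components, $\langle U^*x,e_k\rangle=\langle x,e_{k+1}\rangle$, is the same convention the paper uses implicitly in its own proof of (b) and (c). What your route buys is brevity and a transparent boundedness argument; what it loses is that your $A_m$ does not reproduce the matrix of $A$ on the non-negative rows (e.g.\ $A_me_{4n+1}=0$ while $\langle Ae_{4n+1},e_i\rangle$ need not vanish), so it cannot serve in the sequel: the following Lemma (that $\langle A_me_j,e_i\rangle$ is independent of $m$ and equals $\langle Ae_j,e_i\rangle$ for $i,j\ge 0$) and the weak-limit argument in the characterization theorem both require the paper's specific $A_m$ and fail for yours. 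As a proof of this theorem in isolation your argument stands; as a replacement for the paper's construction it does not.

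Two repairable defects. First, your opening appeal to the preceding Corollary misstates it and flirts with circularity: the Corollary only gives $AC_{z^2}=WT_\phi$ and $AM_zC_{z^2}=WH_\phi$; the identification $A=V_\phi$ is precisely the later characterization theorem, for which the present theorem is a stepping stone, so it cannot be assumed here. Since your construction and verification never actually use $A=V_\phi$, this sentence should simply be deleted. Second, boundedness does not ``follow at once'' from $\|A_me_{2n}\|\le\|A\|$ and $\|A_me_{4n+3}\|\le\|A\|$: uniformly bounded images of an orthonormal basis do not yield a bounded operator, since the images within one family need not be orthogonal. The correct one-line argument, available from your own definition, is that on the closed spans of $\{e_{2n}\}$, $\{e_{4n+3}\}$ and $\{e_{4n+1}\}$ the operator $A_m$ coincides with $\mathcal{W}^{*m}AU^{4m}$, $UAU^{4}$ and $0$ respectively, each a composition of bounded maps of norm at most $\|A\|$; writing $f=f_1+f_2+f_3$ along this orthogonal decomposition gives $\|A_mf\|\le\|A\|\left(\|f_1\|+\|f_2\|\right)\le\sqrt{2}\,\|A\|\,\|f\|$.
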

\begin{proof}
Let $A \in \mathbb{B}(H^2)$ has a slant H-Toeplitz matrix $(\alpha_{i,j})$  with respect to orthonormal basis $\{e_n\}_{n\geq 0}$. For each non-negative integers $m$, define a bounded linear operator $A_m$ on $H^2$ to $H^2\  \cup$ span$\{e_{-1},e_{-2},e_{-3}, \dots,e_{-m}\}$ $\subset L^2$ such that its matrix representation is given by
\begin{equation} \label{matrix1}
  \alpha_{k,0}= \begin{cases}
 \alpha_{k+j,4j}, & \text{for } j\geq 0 \text { and } k\geq -m \\ 
 \alpha_{k-j, 4j-1}, & \text{for } j=1,2,3,\dots \text{ and } k-j \geq -m
 \end{cases}
 \end{equation}and
 \begin{equation}\label{matrix1.1}  
  \alpha_{-m,2k}= a_{i, 2k+4i}, \quad \text{for } i \geq -m \text{ and } k\geq 1.
\end{equation}
\noindent Using the matrix representation of operators $A_m$, it follows that \begin{equation}\label{eq1}
\alpha_{q,2p}= \alpha_{q+j, 2p+4j}  \text { for }  p,q,j \geq 0.
\end{equation} 
Therefore, in particular for $j=m \geq 1$ by equation \eqref{eq1}, it follows that 
\begin{equation}\label{eq3}
\alpha_{q,2p}= \alpha_{q+m, 2p+4m}  \text{ for all } p,q \geq 0
\end{equation} 
Then, by equation $\eqref{eq3}$, it follows that $$\left\langle A_me_{2p}, e_q \right\rangle= \left\langle A e_{2p+2m}, e_{q+m} \right\rangle,$$ or equivalently, $$\left\langle A_mC_{z^2} e_p, e_q\right\rangle= \left\langle \mathcal{W}^*AC_{z^2}U^{2m}e_p, e_q \right\rangle$$  for all $p\geq 0,\  m \geq 1\text{ and } q \geq -m$ and hence $A_mC_{z^2}= {\mathcal{W}^*}^m AC_{z^2}U^{2m}$.\\
Since, $\alpha_{k,0}= \alpha_{k-j,4j-1}\ \text{for all } j\geq 1\text{ and } k-j\geq -m$, therefore it follows that $\alpha_{k+r+2,0}= \alpha_{k+r+2-j, 4j-1}\quad \text {for }j\geq 1, k\geq j-m$ and $r \geq 0.$ 
 In particular for $j=r+1, r+2$, we see that \[\alpha_{k+1,4r+3}= \alpha_{k, 4r+7}\  \text{for all } r\geq 0, k\geq 0.\] Therefore, $\left\langle A_m e_{4r+3}, e_{k+1} \right\rangle = \left\langle A e_{4r+7}, e_k \right\rangle$, or equivalently, we get that $\left\langle U^*A_mM_{z^3}C_{z^4} e_r,e_k \right\rangle= \left\langle AM_{z^3}C_{z^4}U e_r, e_k\right\rangle$ for each $r,k\geq 0$
 and hence it follows that $U^*A_mM_{z^3}C_{z^4}=AM_{z^3}C_{z^4}U$.
 Again by the definition of matrix $(\alpha_{i,j})$, it follows that $\alpha_{k+1,0}= \alpha_{k,3}$ for all $ k \geq 0$, which implies that $\left\langle A_m e_0, e_{k+1}\right\rangle= \left\langle Ae_3, e_k \right\rangle$, or equivalently, $\left\langle U^* A_m e_0, e_k \right\rangle= \left\langle AM_{z^3}e_0, e_k\right\rangle$. Thus, it gives $U^*A_m z^0= AM_{z^3}z^0.$
\end{proof}

\begin{lem} For all $j \geq 0 \text{ and } i \geq -m, \ \left\langle A_m e_j, e_i \right\rangle$ is independent of $m$. 
\end{lem}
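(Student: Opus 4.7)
The plan is to show that the defining relations \eqref{matrix1} and \eqref{matrix1.1} for the matrix $(\alpha_{i,j})$ of $A_m$ determine each individual entry $\alpha_{i,j}$, for $i\geq -m$ and $j\geq 0$, in terms of entries of the slant H-Toeplitz matrix of $A$ alone, without any explicit appearance of $m$. Since for two different choices $m,m'$ both $A_m$ and $A_{m'}$ share the same defining recurrences wherever both are defined, this will give $\langle A_m e_j,e_i\rangle=\langle A_{m'}e_j,e_i\rangle$ whenever $i\geq -\min(m,m')$.

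First, I would handle the easy range $i\geq 0$. For such $i$, the relations \eqref{matrix1}--\eqref{matrix1.1} reduce to the original slant H-Toeplitz relations \eqref{matrix}--\eqref{mmatrix} of $A$, so $\alpha_{i,j}=\langle Ae_j,e_i\rangle$. This is manifestly independent of $m$. Next, for the extended rows $-m\leq i<0$, I would propagate each entry upward into the region $i\geq 0$ using the first branch of \eqref{matrix1}: given $i<0$, I choose $j'\geq -i$, and then $\alpha_{i,0}=\alpha_{i+j',4j'}=\langle Ae_{4j'},e_{i+j'}\rangle$, which is an entry of $A$ and hence does not involve $m$. Independence of the choice of $j'$ follows from the slant H-Toeplitz identity \eqref{matrix} for $A$ itself.

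For a general entry $\alpha_{i,j}$ with $-m\leq i<0$ and $j\geq 1$, I would combine the above with the column-shift consequence \eqref{eq1} derived already in the proof of Theorem~\ref{t2} for even columns, and with the analogous identity stemming from the second branch of \eqref{matrix1} for odd columns. Concretely, for even $j=2p$ I would pick $r\geq -i$ to get $\alpha_{i,2p}=\alpha_{i+r,2p+4r}=a_{i+r,2p+4r}$, and for odd $j=4r+3$ I would use $\alpha_{i,4r+3}=\alpha_{i+s,4(r+s)+3}=a_{i+s,4(r+s)+3}$ for suitable $s$; the remaining odd columns are treated identically. In every case the output lies in the matrix of $A$ and the auxiliary parameters depend only on $|i|$, not on $m$. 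The relation \eqref{matrix1.1} only provides a consistency condition at the boundary row $i=-m$ and does not participate in the propagation of entries with $i>-m$, so the argument is unaffected by its presence.

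The main obstacle will be purely book-keeping: one has to verify that the propagation is well defined and internally consistent for each column parity, and that the various allowable choices of auxiliary index all land on the same value of $A$'s matrix (which is exactly what the slant H-Toeplitz property of $A$ guarantees). Once this is checked, $\alpha_{i,j}$ is seen to be computed by a formula involving only $A$ and the fixed pair $(i,j)$, yielding the claimed independence of $m$.
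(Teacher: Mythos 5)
Your overall strategy --- expressing every entry $\left\langle A_m e_j,e_i\right\rangle$ with $i\geq -m$ as an entry of $A$ by propagating along the defining diagonals, so that $m$ never appears --- is the same idea as the paper's proof, which does exactly this for the new rows and uses the operator identities (a)--(c) of Theorem \ref{t2} to identify the rows $i\geq 0$ with those of $A$. However, two of your propagation steps fail as written. For columns congruent to $3$ modulo $4$, the second branch of \eqref{matrix1} gives \emph{anti-diagonal} constancy, $\alpha_{k,0}=\alpha_{k-j,4j-1}$: moving four columns to the right moves one row \emph{up}, not down, so your identity $\alpha_{i,4r+3}=\alpha_{i+s,4(r+s)+3}$ is false for $s\neq 0$ (in the paper's example matrix $A_1$ one has $\alpha_{-1,3}=a_0$ while $\alpha_{0,7}=a_4$). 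The correct route is $\alpha_{i,4r+3}=\alpha_{i+r+1,0}$, and if $i+r+1<0$ one then pushes down-right along the first branch, $\alpha_{i+r+1,0}=\alpha_{i+r+1+j,4j}$, until the row index is nonnegative.

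Second, for even columns you invoke \eqref{eq1}, but that identity was established only for nonnegative row indices; for a row $-m<i<0$ and a column $\equiv 2\pmod 4$ the only defining relation available is \eqref{matrix1.1}, whose diagonals are anchored at the top row $-m$, and it does not reach entries such as $\alpha_{-1,2}$ when $m=2$ (its diagonal meets neither column $0$ nor row $-m$ inside the matrix). Likewise, no relation in \eqref{matrix1}--\eqref{matrix1.1} involves columns $\equiv 1\pmod 4$ at all, so they cannot be ``treated identically''; the paper's own proof is also silent on these columns, but your write-up explicitly claims to cover them, so this is a gap you must either acknowledge or close by strengthening the definition of $A_m$ (for instance via the operator relations (a)--(c), which is how the paper handles $i\geq 0$). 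Relatedly, ``the restricted matrix satisfies the same relations, hence $\alpha_{i,j}=\left\langle Ae_j,e_i\right\rangle$'' is a non sequitur: the relations are constraints and do not determine a matrix uniquely, so agreement of $A_m$ with $A$ on rows $i\geq 0$ must either be built into the construction of $A_m$ or derived from (a)--(c), as the paper does.
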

\begin{proof}
Let $(\alpha_{i,j})$ be the matrix of $A_m$ with respect to orthonormal basis $\{e_n \}_{n \geq 0}$ satisfying equations \eqref{matrix1} and \eqref{matrix1.1}. For $0>i>-m$, using the matrix definition of operator $A_m$, we get that
\[ \left\langle A_me_{4p}, e_i \right\rangle = \alpha_{0, 4p+4k} = \left\langle Ae_{4(p+k)}, e_0 \right\rangle \text{ where }i+k=0\] and this is true for each non-negative integer $p$. Also, for $p\geq 1$, we have
$\left\langle A_m e_{4p-1} ,e_i\right\rangle = \alpha_{i+p,0}= \left\langle Ae_0, e_{i+p} \right\rangle.$ 
For non-negative integers $i$ and $j$ and by using the relation $A_mC_{z^2}= {\mathcal{W}^*}^m AC_{z^2}U^{2m}$, it follows that $
\left\langle A_mC_{z^2}e_j, e_i \right\rangle = \left\langle {\mathcal{W}^*}^m AC_{z^2}U^{2m}e_j, e_i \right\rangle =\left\langle AC_{z^2} U^{2m}, U^m e_i \right\rangle 
=\left\langle {U^*}^m AC_{z^2} U^{2m}e_j, e_i\right\rangle 
= \left\langle AC_{z^2}e_j, e_i \right\rangle.$
Also, the relation ${U^*} A_mM_{z^3}C_{z^4}= AM_{z^3}C_{z^4}U$ implies that
$\left\langle {U^*} A_mM_{z^3}C_{z^4} e_j, e_i \right\rangle = \left\langle AM_{z^3}C_{z^4}Ue_j,e_i  \right\rangle $ which gives that  $\left\langle A e_{4j+7}, e_i \right\rangle 
= \left\langle Ae_{4j+3}, e_{i+1} \right\rangle$. Moreover, from the definition of $A_m$, we have $\left\langle {U^*} A_mM_{z^3}C_{z^4} e_j, e_i  \right\rangle = \left\langle A_m e_{4j+3}, e_{i+m} \right\rangle = \left\langle Ae_{4j+3}, e_{i+1} \right\rangle.$
Again the relation $U^*A_m z^0= AM_{z^3}z^0$ implies that $ \left\langle U^*A_m e_0, e_i \right\rangle = \left\langle AM_{z^3} e_0, e_i \right\rangle= \alpha_{i,3}= \alpha_{i+1,0} =  \left\langle Ae_0, e_{i+1}\right\rangle$
and also $\left\langle U^*A_m e_0, e_i \right\rangle= \left\langle A_me_0, e_{i+1} \right\rangle$. Therefore, for all non-negative integers $i \text{ and }j $ we get that $ \left\langle A_m e_j, e_i \right\rangle =  \left\langle A e_j, e_i\right\rangle$. Hence, for all $j \geq 0 \text{ and }i \geq -m, \  \left\langle A_m e_j, e_i \right\rangle$ is independent of $m$.
\end{proof}
\begin{exmp}Let $A$ be a bounded linear operator on $H^2$ whose matrix $(a_{i,j})$ with respect to orthonormal basis $\{e_n\}_{n\geq 0}$ is a slant H-Toeplitz matrix and satisfies the relations \eqref{matrix} and \eqref{mmatrix}.
Then the matrices of the operators $A_1$ and $A_2$ defined in the Theorem \ref{t2}, are given by
\[A_1= \begin{bmatrix}
a_{-2} & a_{-1} & a_{-3} & a_{0} & a_{-4} & a_{1} & a_{-5}\cdots\\
a_0 & a_1 & a_{-1} & a_2 & a_{-2} & a_{3} & a_{-3}\cdots\\
a_2 & a_3 & a_{1} & a_4 & a_{0} & a_{5} & a_{-1}\cdots\\
a_4 & a_5 & a_{3} & a_6 & a_{2} & a_{7} & a_{1}\cdots\\
a_6 & a_7 & a_{5} & a_8 & a_{4} & a_{9} & a_{3}\cdots\\
a_8 & a_9 & a_{7} & a_{10} & a_{6} & a_{11} & a_{5}\cdots\\
\vdots & \vdots & \vdots & \vdots & \vdots & \vdots & \vdots
\end{bmatrix}\]
and \[A_2= \begin{bmatrix}
a_{-4} & a_{-3} & a_{-5} & a_{-2} & a_{-6} & a_{-1} & a_{-7}\cdots\\
a_{-2} & a_{-1} & a_{-3} & a_{0} & a_{-4} & a_{1} & a_{-5}\cdots\\
a_0 & a_1 & a_{-1} & a_2 & a_{-2} & a_{3} & a_{-3}\cdots\\
a_2 & a_3 & a_{1} & a_4 & a_{0} & a_{5} & a_{-1}\cdots\\
a_4 & a_5 & a_{3} & a_6 & a_{2} & a_{7} & a_{1}\cdots\\
a_6 & a_7 & a_{5} & a_8 & a_{4} & a_{9} & a_{3}\cdots\\
a_8 & a_9 & a_{7} & a_{10} & a_{6} & a_{11} & a_{5}\cdots\\
\vdots & \vdots & \vdots & \vdots & \vdots & \vdots & \vdots
\end{bmatrix} \]
Similarly, we can obtain the matrix representation for the other operators $A_m$ for $m>2$ and clearly all these operators satisfying the conditions given in Theorem \ref{t2}.
\end{exmp}
In the following theorem we give the characterization for slant H-Toeplitz operators.
\begin{thm} A necessary and sufficient condition for an operator on $H^2$ to be a slant H-Toeplitz operator is that its matrix with respect to the orthonormal basis $\{e_n\}_{n\geq 0}$ is the slant H-Toeplitz matrix.
\end{thm}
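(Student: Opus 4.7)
The necessity direction is essentially already done: the computation preceding the definition of a slant H-Toeplitz matrix showed that the entries $a_{i,j}=\langle V_\phi e_j,e_i\rangle$ of any slant H-Toeplitz operator $V_\phi$ satisfy relations \eqref{matrix} and \eqref{mmatrix}. I would begin the proof by simply pointing to that calculation; nothing new is needed.

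For sufficiency, suppose $A\in\mathcal B(H^2)$ has matrix $(a_{i,j})$ satisfying \eqref{matrix} and \eqref{mmatrix}. The plan is to exhibit a $\phi\in L^\infty$ for which $A=V_\phi$, and the natural candidate is the $\phi$ produced by the Corollary to Theorem~\ref{A1}. Concretely, that corollary delivers a $\phi\in L^\infty$ with
\[
AC_{z^2}=B_\phi=WT_\phi,\qquad AM_zC_{z^2}=L_\phi=WH_\phi.
\]
Since $C_{z^2}e_n=e_{2n}$ and $M_zC_{z^2}e_n=e_{2n+1}$, this reads
\[
A(e_{2n})=B_\phi(e_n),\qquad A(e_{2n+1})=L_\phi(e_n)\qquad\text{for all }n\ge 0.
\]
On the other hand, the identities
\[
V_\phi(e_{2n})=B_\phi(e_n),\qquad V_\phi(e_{2n+1})=L_\phi(e_n),
\]
derived explicitly in the paragraph following the matrix of $V_\phi$, show that $A$ and $V_\phi$ agree on the orthonormal basis $\{e_n\}_{n\ge 0}$. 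By linearity and boundedness (both $A$ and $V_\phi$ lie in $\mathcal B(H^2)$), this yields $A=V_\phi$, completing the sufficient direction.

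The only step that carries real content is invoking the Corollary to Theorem~\ref{A1}, and I expect the main subtlety to be checking that the $\phi$ it produces is actually in $L^\infty$ (the corollary defines $\phi$ via its Fourier coefficients in \eqref{phi}, and boundedness of $\phi$ follows from the boundedness of $A$ together with the boundedness of the associated slant Toeplitz and slant Hankel operators). Once $\phi\in L^\infty$ is in hand, the proof is essentially a one-line matching on the basis $\{e_n\}$ via the splitting of even and odd indexed basis vectors into the slant Toeplitz and slant Hankel parts, which is exactly the direct-sum decomposition highlighted in the earlier Remark.
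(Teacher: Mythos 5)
Your proof is correct, and it is essentially the concluding portion of the paper's own argument, presented more economically. The paper's proof first constructs, for every $m\geq 0$, the auxiliary operators $A_m$ of Theorem \ref{t2}, checks that $\left\langle A_m e_j, e_i\right\rangle$ is independent of $m$, and passes to a weak limit $B$ with $PBf=Af$ (a Brown--Halmos style step); only afterwards does it define $\phi$ by \eqref{phi}, invoke $AC_{z^2}=WT_\phi$ and $AM_zC_{z^2}=WH_\phi$, and finish by splitting $h(z)=h_1(z^2)+zh_2(z^2)$ into even and odd parts to get $A h=V_\phi h$. You skip the $A_m$/weak-limit machinery entirely and go straight to the Corollary of Theorem \ref{A1} plus the even/odd matching, done on basis vectors $e_{2n}=C_{z^2}e_n$, $e_{2n+1}=M_zC_{z^2}e_n$ rather than on a general $h$ --- an equivalent formulation, since both operators are bounded. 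Logically this costs nothing within the paper's structure, because the corollary (which precedes the theorem) already asserts a single $\phi\in L^\infty$ serving both $AC_{z^2}=WT_\phi$ and $AM_zC_{z^2}=WH_\phi$; what your route does is concentrate all the real content in that corollary, and you are right to flag that the membership $\phi\in L^\infty$ (equivalently, that a bounded operator with slant Toeplitz, respectively slant Hankel, matrix has an $L^\infty$ symbol) is the point that is asserted rather than fully verified there --- the paper's extra $A_m$ construction does not remedy this either, so your shorter argument is not weaker than the published one. One small presentational point: it is worth stating explicitly that the same symbol $\phi$ occurs in both identities of the corollary (this is what makes the basis matching with the single operator $V_\phi$ legitimate), which is guaranteed by the definition \eqref{phi} and the compatibility relations \eqref{matrix} and \eqref{mmatrix} linking the even and odd columns of the matrix of $A$.
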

\begin{proof}
Every slant H-Toeplitz operator defined on $H^2$ has a  slant H-Toeplitz matrix with respect to the orthonormal basis $\{e_n\}_{n\geq 0}$. Conversely, let us assume that $A$ be a bounded linear operator on $H^2$ whose matrix with respect to the orthonormal basis $\{e_n\}_{n \geq 0}$ is a slant H-Toeplitz matrix. So, we claim that $A$ is a slant H-Toeplitz operator.\\
For each non-negative integers $m$, consider a bounded linear operator $A_m$ defined on $H^2$ to $L^2$ such that its matrix satisfies the conditions \eqref{matrix1} and \eqref{matrix1.1} given in the Theorem \ref{t2}. Then, the operators $A_m$ satisfies the following 
\begin{enumerate}
\item[(a)]$A_mC_{z^2}= {\mathcal{W}^*}^m AC_{z^2}U^{2m}$
\item[(b)]$U^*A_mM_{z^3}C_{z^4}= AM_{z^3}C_{z^4}U$
\item[(c)]$U^*A_m z^0= AM_{z^3}z^0.$
\end{enumerate}
Moreover, for non-negative integers $i$ and $j$, we have $\left\langle A_m z^j, z^i\right\rangle = \left\langle Az^j, z^i \right\rangle $. If $p \text{ and }q$ are finite linear combinations of $z^i$ for $i\geq 0$, then the sequence $\{\left\langle A_m p, q \right\rangle\}$ is convergent. Therefore, the sequence $\{A_m\}$ of operator on $H^2$ is weakly convergent to a bounded linear operator say, $B$, on $H^2$. Then, for $i,j \geq 0$, it follows 
\begin{flalign*}
\left\langle PB z^j, z^i\right\rangle & = \left\langle B z^j, z^i\right\rangle = \lim _{m \to \infty} \left\langle A_m z^j, z^i\right\rangle = \left\langle A z^j, z^i \right\rangle=  a_{i,j}  &
\end{flalign*} 
and if $f$ and $g$ are in $H^2$, then we have
\begin{flalign*}
\left\langle P Bf, g\right\rangle & = \lim _{m \to \infty} \left\langle A_m f, g\right\rangle = \lim _{m \to \infty} \left\langle A f, g \right\rangle  = \left\langle A f, g \right\rangle.
\end{flalign*} 
Therefore, $PB f = Af$ for each  $f \in H^2$. Hence, it follows that operator $A$ is a slant H-Toeplitz operator on $H^2$. Let the Fourier coefficients of $\phi$ that induces the operator $A$ from its matrix are given by \begin{equation*}
\left\langle \phi, z^{k} \right\rangle = \begin{cases}
a_{k/2,0}, &  k\geq 0 \text{ and $k$ is even} \\
a_{(k-1)/2,1} & k\geq 0 \text{ and $k$ is odd}\\
a_{0,-2k}, &  k\leq -1. 
\end{cases}
\end{equation*}
Let if $f(z)= z^n \in H^2$ then $AC_{z^2} f(z)= Az^{2n}$ and $AM_zC_{z^2}= Az^{2n+1}$. Since the operators $AC_{z^2}$ and $AM_zC_{z^2}$ are slant Toeplitz and slant Hankel operators, respectively,  $AC_{z^2}= WT_\phi$ and $AM_zC_{z^2}= WH_\phi.$ Then for each function $f_1(z^2)\in H^2$, we have
\[AC_{z^2}(f_1(z))= WT_\phi f_1(z)= WPM_\phi K (f_1(z^2))= V_\phi (f_1(z^2))\]  and for each $f_2(z^2)\in H^2$, we obtain
\begin{flalign*}
&AM_zC_{z^2} (f_2(z))= WH_\phi (f_2(z))= WPM_\phi J(f_2(z))= WPM_\phi(z^{-1}f_2(z^{-1}))&\\
&= WPM_\phi K(zf_2(z^2))= V_\phi(zf_2(z^2)).
\end{flalign*}
If $h(z)\in H^2$, then $h(z)= h_1(z^2)+ zh_2(z^2)$. Moreover, we have 
\begin{flalign*}
A(h(z))&= A(h_1(z^2)+ zh_2(z^2)) = A(h_1(z^2))+ A(zh_2(z^2))&\\
&= AC_{z^2}(h_1(z)) + AM_zC_{z^2}(h_2(z)) &\\
&= V_\phi((h_1(z^2))+ V_\phi((zh_2(z^2))&\\
&= V_\phi(h_1(z^2)+ zh_2(z^2))= V_\phi (h(z))
\end{flalign*}
which is true for every $h(z) \in H^2$. Hence, the operator $A$ is a slant H-Toeplitz operator with symbol $\phi$.
\end{proof}

In the following theorem we give another characterization for slant H-Toeplitz operators.

\begin{thm}\label{chract2} A bounded linear operator $A$ on $H^2$ is a slant H-Toeplitz operator if and only if satisfies \begin{enumerate}
\item[(a)]$AC_{z^2}= {U^*} AC_{z^2}U^{2}$
\item[(b)]$U^*AM_{z^3}C_{z^4}= AM_{z^3}C_{z^4}U$
\item[(c)]$U^*A z^0= AM_{z^3}z^0.$
\end{enumerate}
\end{thm}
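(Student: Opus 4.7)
My approach is to reduce both directions to the preceding matrix characterization theorem by translating each of the operator identities (a), (b), (c) into an entry-wise identity on the matrix $a_{i,j} := \langle A e_j, e_i\rangle$ of $A$ with respect to $\{e_n\}_{n\geq 0}$.

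For the forward implication, I will take $A = V_\phi$ and use $C_{z^2}e_p = e_{2p}$, $M_{z^3}C_{z^4}e_p = e_{4p+3}$, together with the adjoint identity $\langle U^*x, e_q\rangle = \langle x, e_{q+1}\rangle$, to reduce (a), (b), (c) respectively to
\[
a_{q,2p} = a_{q+1,2p+4}, \qquad a_{q+1,4p+3} = a_{q,4p+7}, \qquad a_{k+1,0} = a_{k,3}
\]
for all $p,q,k\geq 0$. Each of these should follow immediately from the explicit matrix of $V_\phi$ displayed earlier, since the two sides of each equation refer to the same Fourier coefficient of $\phi$.

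For the converse, I will assume $A$ satisfies (a), (b), (c) and reverse the translation to obtain the same three coefficient identities. Iterating (a) yields $a_{q,2p}=a_{q+j,2p+4j}$ for all $p,q,j\geq 0$; the specialisation $p=0$ will recover the first slant H-Toeplitz matrix relation $a_{k,0}=a_{k+j,4j}$, and $q=0$ the third relation $a_{0,2k}=a_{i,2k+4i}$. Iterating (b) with $p=0$ produces $a_{q,4j+3}=a_{q+j,3}$, and then combining with (c), read as $a_{q+j,3}=a_{q+j+1,0}$, and reindexing via $k=q+j+1$, $r=j+1$ should yield $a_{k,0}=a_{k-r,4r-1}$ for $r\geq 1$, $k-r\geq 0$, $k\geq 1$, which is precisely the second slant H-Toeplitz matrix relation. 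Hence the matrix of $A$ will be a slant H-Toeplitz matrix, and by the preceding characterization theorem $A$ will coincide with $V_\phi$ for a unique $\phi\in L^\infty$ recovered from the first row and first column.

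The main obstacle is the book-keeping for condition (b): on its own, (b) only constrains entries in columns of the form $4p+3$, so the second slant H-Toeplitz relation, which ties column $0$ to columns $3,7,11,\dots$, becomes accessible only after iterating (b) and then using (c) as a single-step bridge back to column $0$. Once that bridge is identified, the remaining steps are straightforward index manipulations that just unwind inner products against basis vectors.
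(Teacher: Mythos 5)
Your proposal is correct and follows essentially the same route as the paper: both directions reduce (a)--(c) to the entry-wise relations on $a_{i,j}=\langle Ae_j,e_i\rangle$, with the converse obtained by iterating (a) and (b) and using (c) as the single-step bridge between column $0$ and column $3$ to recover the slant H-Toeplitz matrix relations, and then invoking the preceding matrix characterization theorem. The only cosmetic difference is in the forward direction, where the paper verifies (a) and (b) by identifying $AC_{z^2}=WT_\phi$ and $AM_zC_{z^2}=WH_\phi$ as slant Toeplitz and slant Hankel operators, whereas you check the same identities directly against the explicit matrix of $V_\phi$.
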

\begin{proof}
Let the operator $A \in \mathcal{B}(H^2)$  satisfies the conditions (a),  (b) and (c).
Then from (a) and (b) it follows that $AC_{z^2}$ is a slant Toeplitz operator and $AM_zC_{z^2}$ is a slant Hankel operator. Also if $f\in H^2$, then 
\[{U^*} AC_{z^2}U^2 (f(z))= AC_{z^2}(f(z))\] and
\[U^*AM_{z^3}C_{z^4}(f(z)) = AM_{z^3}C_{z^4}U(f(z)).\] This gives that, 
\begin{equation}\label{eqn1}
U^*A(z^4f(z^2))= A(f(z^2))\text{ and } U^*A(z^3f(z^4))= A(z^7f(z^4)).
\end{equation}
This is true for each functions $f(z^2),f(z^4)\in H^2$. Therefore, in particular for $f(z^2)=z^0,z^2,z^4,z^6, \dots$ and $f(z^4)= z^0,z^4,z^8,z^{12}, \dots$  using equation \eqref{eqn1}, we obtain the following relations:
\begin{flalign}\label{eqr1}
U^*A(z^{2n+4}) = A(z^{2n}) \text{ and } U^*A(z^{4n+3})= A(z^{4n+7})\text{
 for } n\geq 0. 
\end{flalign} 
Let $(a_{i,j})$ be the matrix of the bounded linear operator $A$ with respect to orthonormal basis $\{e_n\}_{n\geq 0}$. Then, for all $k \geq 0$ and by the relation \eqref{eqr1}, we have  
\begin{flalign*}
a_{k,0} &= \left\langle Az^0, z^k\right\rangle = \left\langle U^*A z^4, z^k \right\rangle = \left\langle Az^4, z^{k+1}\right\rangle = a_{k+1,4}&\\
&= \left\langle U^*A z^8, z^{k+1} \right\rangle = \left\langle Az^8, z^{k+2}\right\rangle = a_{k+2,8} &\\
&= \left\langle U^*A z^{12}, z^{k+2} \right\rangle = \left\langle A z^{12}, z^{k+3} \right\rangle= a_{k+3,12}
\end{flalign*}
and so on. On continuing in this manner, for $j \geq 1$ and $ k\geq 0$, we obtain that $a_{k,0}= a_{k+j, 4j}$. Again for each $k\geq 1$ and by the relation \eqref{eqr1}, it follows that \begin{flalign*}
a_{k,0} &= \left\langle Az^0, z^k\right\rangle = \left\langle U^*A z^0, z^{k-1} \right\rangle = \left\langle AM_{z^3}z^0, z^{k-1}\right\rangle =  a_{k-1,3}&\\
&= \left\langle U^*A z^3, z^{k-2} \right\rangle = \left\langle Az^7, z^{k-2}\right\rangle = a_{k-2,7} &\\
&= \left\langle U^*A z^{7}, z^{k-3} \right\rangle = \left\langle A z^{11}, z^{k-3} \right\rangle= a_{k-3,11} 
\end{flalign*}
 and so on. Again on continuing the same manner it follows that $\text{for all } k\geq 1$, 
$a_{k,0}= a_{k-1,3}= a_{k-2,7}= a_{k-3,11} \dots \dots = a_{0,4k-1}$.\linebreak   
Therefore, for each $k\geq 1 \text { and for } j=1,2,3,\dots , k-j \geq0$, it follows that $\ a_{k,0}= a_{k-j,4j-1}.$
Again for $k\geq 0$ and from the relation \eqref{eqr1}, it follows that \begin{flalign*}
a_{0,2k} &= \left\langle Az^{2k}, z^0\right\rangle = \left\langle U^*A z^{2k+4}, z^0 \right\rangle = \left\langle Az^{2k+4}, z^{1}\right\rangle =  a_{1,2k+4}&\\
&= \left\langle U^*A z^{2k+8}, z^{1} \right\rangle = \left\langle Az^{2k+8}, z^{2}\right\rangle = a_{2,2k+8} &\\
&= \left\langle U^*A z^{2k+12}, z^{2} \right\rangle = \left\langle A z^{2k+12}, z^{3} \right\rangle= a_{3,2k+12}  
\end{flalign*}
and so on.
Therefore, on continuing the same process, for all $k\geq 0$ and  $i \geq 1$ it follows that
  $a_{0,2k}= a_{i, 2k+4i}$. Since the matrix $(a_{i,j})$ satisfies the relations \eqref{matrix} and \eqref{mmatrix}, therefore the  matrix  $(a_{i,j})$ is  a slant H-Toeplitz matrix. Thus, the operator $A$ is a slant H-Toeplitz operator on $H^2$ with symbol $\phi$ whose Fourier coefficients are given by 
\begin{equation*}
\left\langle \phi, z^{k} \right\rangle = \begin{cases}
a_{k/2,0}, &  k\geq 0 \text{ and $k$ is even} \\
a_{(k-1)/2,1} & k\geq 0 \text{ and $k$ is odd}\\
a_{0,-2k}, &  k\leq -1. 
\end{cases}
\end{equation*} 
Conversely, let the operator $A$ be a slant H-Toeplitz operator on $H^2$. Then, $A=V_\phi$ for some non-zero $\phi \in L^\infty$. So, for each $f \in H^2$ we have
\[ AC_{z^2}(f(z))= V_\phi C_{z^2}(f(z))= WPM_\phi K (f(z^2))= WT_\phi (f(z)).
\] Hence, $AC_{z^2}$ is a slant Toeplitz operator and therefore we get that,  $U^*AC_{z^2}U^2= AC_{z^2}$.
Also for each $f \in H^2$, it follows
\begin{flalign*}
AM_zC_{z^2}(f(z))&=  V_\phi(zf(z^2))=WPM_\phi K(zf(z^2))= WPM_\phi (z^{-1}f(z^{-1}))&\\
&= WPM_\phi J(f(z))= WH_\phi (f(z)).
\end{flalign*}
Therefore, the operator $AM_zC_{z^2}$ is a slant Hankel operator and hence 
$U^*AM_{z^3}C_{z^4}= AM_{z^3}C_{z^4}U$. 
Again if $\phi(z)= \sum\limits_{n=-\infty}^\infty a_n z^n$, then the operator $A$ satisfies the following:\[U^*A(z^0)= U^*V_\phi(z^0)= U^*WP\phi(z)= U^*\sum_{n=0} ^\infty a_{2n}z^n= \sum_{n=0} ^\infty a_{2n+2}z^n\] and 
\[AM_{z^3}(z^0)= V_\phi(z^3)= WPM_\phi K(z^3)=WP\phi(z^{-2})= \sum_{n=0} ^\infty a_{2n+2}z^n.\] Therefore,  $AM_{z^3}(z^0)= U^*A(z^0).$ Thus, every slant H-Toeplitz operator satisfies the above three conditions of the theorem.
\end{proof}

\noindent In the following theorem, we have shown that there does not exist any  non-zero self-adjoint slant H-Toeplitz operator on $H^2$.
\begin{thm}The slant H-Toeplitz operator $V_\phi$ with the symbol $\phi$ is self adjoint if and only if $\phi=0$.
\end{thm}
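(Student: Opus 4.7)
The sufficiency is immediate: if $\phi=0$, then $V_\phi=0$, which is trivially self-adjoint. My plan for the converse is to reduce the statement to the hyponormality theorem already proved earlier in this section. Indeed, if $V_\phi = V_\phi^*$, then
\[V_\phi V_\phi^* \;=\; V_\phi^2 \;=\; V_\phi^* V_\phi,\]
so $V_\phi$ is normal, and in particular hyponormal, since a normal operator $A$ satisfies $\|Af\|=\|A^*f\|$ for every $f$. Applying the theorem stating that the only hyponormal slant H-Toeplitz operator on $H^2$ is the zero operator, we conclude $V_\phi = 0$. Because the correspondence $\phi \longmapsto V_\phi$ is one-one (by the first theorem of Section~2), this forces $\phi = 0$.

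An alternative, more direct route would be to write $\phi=\sum_{n=-\infty}^\infty a_n e_n$ and to exploit the explicit matrix formulas for $V_\phi$ and $V_\phi^*$ derived earlier. Comparing the two sides of $V_\phi e_0 = V_\phi^* e_0$ already produces $a_0\in\mathbb{R}$, $a_{4m} = \overline{a_{-m}}$ for $m\geq 1$, and $a_{4k-2} = \overline{a_k}$ for $k\geq 1$; iterating these constraints along with those coming from $V_\phi e_1 = V_\phi^* e_1$, $V_\phi e_2 = V_\phi^* e_2$, and so on, would drive every Fourier coefficient of $\phi$ to zero. This is essentially a bookkeeping exercise in the same style as the hyponormality proof, but considerably more laborious.

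The point of invoking hyponormality is precisely to sidestep this bookkeeping. Consequently there is no genuine obstacle: the only care needed in writing the argument is to record the standard chain of implications \emph{self-adjoint} $\Rightarrow$ \emph{normal} $\Rightarrow$ \emph{hyponormal}, and then to cite the previously established result.
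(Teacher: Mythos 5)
Your proof is correct, but it takes a genuinely different route from the paper's. You reduce the converse to the earlier result that the only hyponormal slant H-Toeplitz operator is the zero operator, via the standard chain self-adjoint $\Rightarrow$ normal $\Rightarrow$ hyponormal; since that hyponormality theorem is proved in Section 2, before the self-adjointness statement, and its proof does not depend on it, there is no circularity, and it already yields $\phi=0$ directly (your final appeal to the injectivity of $\phi\mapsto V_\phi$ is harmless but redundant, since the hyponormality theorem is stated in terms of $\phi$). The paper instead observes that if $V_\phi$ is self-adjoint then $V_\phi^{*}$ is itself a slant H-Toeplitz operator, so it satisfies conditions (a)--(c) of the characterization in Theorem \ref{chract2}; unwinding condition (c) gives the coefficient relations $\overline{a_{-6}}=\overline{a_{-5}}=0$, $\overline{a_{-n}}=\overline{a_{-n-6}}$ and $\overline{a_{n+1}}=\overline{a_{n-5}}$ for $n\geq 1$, which combined with $a_n\to 0$ force $\phi=0$. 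What your route buys is brevity and reuse of an already-established theorem; what the paper's buys is a self-contained computation showing how the characterization constrains the symbol. Your sketched ``direct'' alternative starts from correct identities (matching $V_\phi e_0$ with $V_\phi^{*}e_0$ does give $a_0\in\mathbb{R}$, $a_{4m}=\overline{a_{-m}}$ and $a_{4k-2}=\overline{a_k}$), but as you note it is incomplete as written and you rightly do not rely on it.
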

\begin{proof}
If $\phi=0$, then result is obvious. Now suppose that the operator $V_\phi \neq 0$ and its adjoint $V_\phi^*$ is a slant H-Toeplitz operator. Then, by Theorem \eqref{chract2}, the operator $V_\phi^*$ satisfies the following- 
\begin{enumerate}
\item[(a)]${U^*} V_\phi^* C_{z^2}U^2= V_\phi^* C_{z^2}$
\item[(b)]$U^*V_\phi^* M_{z^3}C_{z^4}= V_\phi^* M_{z^3}C_{z^4}U$
\item[(c)]$U^*V_\phi^* z^0= V_\phi^* M_{z^3}z^0.$
\end{enumerate}
For $\phi(z)= \sum_{n=-\infty}^\infty a_n z^n$, the relation (c) implies that $U^*(K^*M_{\bar{\phi}})(1) = K^*M_{\bar{\phi}}W^*(z^3)$, or equivalently, $U^* K^* \Big(\sum\limits _{n=-\infty}^\infty \overline{a_n}\bar{z}^n \Big)= K^* \Big( \sum\limits _{n=-\infty}^\infty \overline{a_n}\bar{z}^{n+6} \Big)$ which gives that 
\begin{flalign*}
&U^* \Big(\sum_{n=0}^\infty \overline{a_{-n}}z^{2n} +  \sum_{n=0}^\infty \overline{a_{n+1}}z^{2n+1}\Big) = K^*\Big(\sum_{n=0}^\infty \overline{a_{-n-6}}z^{n} + \sum_{n=0}^\infty \overline{a_{n-5}}z^{-n-1} \Big)&\\
&\text{and this gives } \sum_{n=1}^\infty \overline{a_{-n}}z^{2n} +  \sum_{n=1}^\infty \overline{a_{n+1}}z^{2n+1} =
\sum_{n=0}^\infty \overline{a_{-n-6}}z^{2n} + \sum_{n=0}^\infty \overline{a_{n-5}}z^{2n+1}. 
\end{flalign*}
Therefore, on comparing the coefficients we get $\overline{a_{-6}}=0, \overline{a_{-5}}=0$ and $\overline{a_{-n}}= \overline{a_{-n-6}},\  \overline{a_{n+1}}= \overline{a_{n-5}}\ $ for $n\geq 1$.
Now since $a_n \to 0 \text{ as } n \to \infty$, therefore this implies that $a_n=0 $ for each $n$ and hence  $\phi=0$.
\end{proof}

\noindent Next we show that a non-zero slant Toeplitz operator can not be a slant H-Toeplitz operator.
\begin{thm}A slant Toeplitz operator $B_\phi$ is a slant H-Toeplitz operator if and only if $\phi=0$.
\end{thm}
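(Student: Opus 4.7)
The plan is to prove the nontrivial direction: assuming $B_\phi$ is a slant H-Toeplitz operator, i.e., $B_\phi = V_\psi$ for some $\psi \in L^\infty$, show that $\phi = 0$. The converse is immediate, since $B_0 = 0 = V_0$.

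Write $\phi = \sum_{k\in\mathbb{Z}} c_k e_k$ and $\psi = \sum_{k\in\mathbb{Z}} d_k e_k$. A direct expansion using the definitions of $W$ and $P$ gives $B_\phi(e_m) = WPM_\phi(e_m) = \sum_{j\geq 0} c_{2j-m}\, e_j$ for every $m\geq 0$. Using the identities $V_\psi(e_{2n}) = B_\psi(e_n)$ and $V_\psi(e_{2n+1}) = L_\psi(e_n) = WPM_\psi(e_{-n-1})$ already recorded in the paper, the same type of expansion yields $V_\psi(e_{2n}) = \sum_{j\geq 0} d_{2j-n}\, e_j$ and $V_\psi(e_{2n+1}) = \sum_{j\geq 0} d_{2j+n+1}\, e_j$. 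Hence the equation $B_\phi(e_m) = V_\psi(e_m)$, split according to the parity of $m$, forces the two families of coefficient identities
\[
c_{2j-2n} = d_{2j-n} \qquad\text{and}\qquad c_{2j-2n-1} = d_{2j+n+1} \qquad (j,n\geq 0).
\]

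To kill every Fourier coefficient of $\phi$, I use the fact that $\psi\in L^\infty\subseteq L^2$ makes $\{d_k\}$ an $\ell^2(\mathbb{Z})$ sequence, so $d_k\to 0$ as $|k|\to\infty$. For any integer $k$, setting $j = n+k$ in the first identity (valid for $n\geq \max\{0,-k\}$) yields $c_{2k} = d_{n+2k}$ with $n$ arbitrarily large, forcing $c_{2k}=0$. For any odd integer $m$, setting $j = n + (m+1)/2$ in the second identity (valid for $n\geq \max\{0,-(m+1)/2\}$) yields $c_m = d_{3n+m+2}$ with $n$ arbitrarily large, forcing $c_m = 0$. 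Combining, every Fourier coefficient of $\phi$ vanishes and $\phi = 0$.

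The only real obstacle is bookkeeping in the two index families; the conceptual content is that each Fourier coefficient of $\phi$ is pinned to a Fourier coefficient of $\psi$ at an arbitrarily distant index, after which square-summability of $\{d_k\}$ collapses it to zero.
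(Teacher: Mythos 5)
Your proof is correct, but it takes a genuinely different route from the paper's. The paper proves the nontrivial direction by invoking its operator-equation characterization of slant H-Toeplitz operators (the theorem stating $A$ is slant H-Toeplitz iff $U^*AC_{z^2}U^2=AC_{z^2}$, $U^*AM_{z^3}C_{z^4}=AM_{z^3}C_{z^4}U$, $U^*Az^0=AM_{z^3}z^0$): it feeds these identities into the Fourier expansion of $B_\phi$ and obtains relations such as $a_{2j-2m-2}=a_{2j-2m}$ and $a_{2j-4m-1}=a_{2j-4m-7}$, so the even (respectively odd) Fourier coefficients of $\phi$ form a constant family, which must vanish since Fourier coefficients tend to zero. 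You instead argue straight from the definition: writing $B_\phi=V_\psi$ for some second symbol $\psi$, you expand both sides on the basis (your formulas $B_\phi(e_m)=\sum_{j\geq 0}c_{2j-m}e_j$, $V_\psi(e_{2n})=\sum_{j\geq 0}d_{2j-n}e_j$, $V_\psi(e_{2n+1})=\sum_{j\geq 0}d_{2j+n+1}e_j$ are all correct) and obtain the cross identities $c_{2j-2n}=d_{2j-n}$ and $c_{2j-2n-1}=d_{2j+n+1}$; your index substitutions $j=n+k$ and $j=n+(m+1)/2$ then pin each coefficient of $\phi$ to coefficients of $\psi$ at arbitrarily large index, and decay of $\{d_k\}$ (from $\psi\in L^\infty\subseteq L^2$) finishes it. Your route is more self-contained --- it bypasses the characterization theorem entirely --- and as a bonus it gives $\psi=0$ as well (once $\phi=0$, $V_\psi=0$, so $\psi=0$ by the injectivity of $\phi\mapsto V_\phi$ proved earlier in the paper). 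The paper's route has the advantage that its coefficient relations involve only $\phi$ itself, exhibiting the structural fact that all even coefficients of $\phi$ coincide and all odd coefficients coincide, and it reuses machinery the paper has already built.
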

\begin{proof}
If $\phi=0$, then result is trivial. Let $B_\phi$ be a slant H-Toeplitz operator. Then, by using the Theorem \eqref{chract2}, the operator $B_\phi$ satisfies the following:
\begin{enumerate}
\item[(a)]${U^*} B_\phi C_{z^2}U^2= B_\phi C_{z^2}$
\item[(b)]$U^*B_\phi M_{z^3}C_{z^4}= B_\phi M_{z^3}C_{z^4}U$
\item[(c)]$U^*B_\phi z^0= B_\phi M_{z^3}z^0.$
\end{enumerate}
Taking $\phi(z)=\sum_{n=-\infty}^\infty a_nz^n$ and as
 ${U^*} B_\phi C_{z^2}U^2(z^m)= {U^*} B_\phi(z^{2m+4})$. Then by using part (a) we get $\left\langle U^*B_\phi C_{z^2}U^2 z^m, z^j\right\rangle = \left\langle B_\phi C_{z^2} z^m, z^j \right\rangle$ which implies that $\left\langle B_\phi z^{2m+4}, z^{j+1} \right\rangle = \left\langle B_\phi z^{2m}, z^j \right\rangle$. This gives that
\begin{flalign*}  
& \Big\langle WP\Big(\sum_{n=-\infty}^\infty a_nz^{2m+n+4}\Big), z^{j+1}\Big\rangle =\Big\langle WP\Big(\sum_{n= -\infty}^\infty a_nz^{n+2m}\Big), z^j\Big\rangle &\\
& \text{or, equivalently, } \Big\langle \sum_{n=-2m-4}^\infty a_nz^{2m+n+4}, z^{2j+2}\Big\rangle =\Big\langle \sum_{n=-2m}^\infty a_nz^{n+2m}, z^{2j} \Big\rangle 
\end{flalign*}
and therefore, $a_{2j-2m-2}= a_{2j-2m} \text{ for all } m,j\geq 0$. Now on substituting $m,j=0,1,2,3,\dots$, we get that $a_0=a_{2n}$ for all integer $n$. Since $a_n \to 0 \text { as }n \to \infty$, therefore for each integer $n$, we get that $a_{2n}= 0$. Now for $m\geq 0$, we have $U^*B_\phi M_{z^3}C_{z^4}(z^m)= U^*B_\phi M_{z^3}(z^{4m})= U^* B_\phi (z^{4m+3})$ and  $B_\phi M_{z^3} C_{z^4} U(z^m)= B_\phi M_{z^3}(z^{4m+4})= B_\phi (z^{4m+7})$.
Then, from the relation (b) it follows $\left\langle U^*B_\phi M_{z^3}C_{z^4} z^m, z^j\right\rangle = \left\langle B_\phi M_{z^3} C_{z^4} Uz^m, z^j \right\rangle$, that is, $\left\langle U^* B_\phi z^{4m+3}, z^j \right\rangle = \left\langle B_\phi z^{4m+7}, z^j \right\rangle$. This further implies that
\begin{flalign*}
& \Big\langle WP\Big(\sum_{n=-\infty}^\infty a_nz^{4m+n+3}\Big), z^{j+1}\Big\rangle =\Big\langle WP\Big(\sum_{n= -\infty}^\infty a_n z^{n+4m+7}\Big), z^j\Big\rangle \text{ or, }&\\
& \text{ equivalently, } \Big\langle \sum_{n=-4m-3}^\infty a_n z^{n+4m+3}, z^{2j+2}\Big\rangle =\Big\langle \sum_{n=-4m-7}^\infty a_n z^{n+4m+7}, z^{2j} \Big\rangle.
\end{flalign*}
Thus, it gives that $a_{2j-4m-1}= a_{2j-4m-7} \text{ for all } j,m\geq 0$ and this implies that $a_1= a_{2n+1}\ \text{for all integers } n $. Since $a_n \to 0 \text { as }n \to \infty$, therefore it follows that $a_{2n+1}= 0  \text{ for all integers } n $ and hence $\phi= 0$.
\end{proof}
\begin{thm}If a slant Hankel operator $L_\phi$ is a slant H-Toeplitz operator, then $\phi \in (z+z^3H^\infty)^\perp$, where $(z+z^3H^\infty)= \{z+z^3 \psi: \ \psi \in H^\infty\}.$
\end{thm}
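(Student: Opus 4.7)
The plan is to invoke the characterization from Theorem \ref{chract2}: since $L_\phi$ is assumed slant H-Toeplitz, it must satisfy the three operator identities (a), (b), (c) there. Feeding each identity into an explicit formula for $L_\phi$ on the basis $\{e_n\}_{n\geq 0}$ will produce linear recurrences on the Fourier coefficients $a_n$ of $\phi$, and the Riemann--Lebesgue decay $a_n \to 0$ (available because $\phi \in L^\infty \subset L^2$ on $\mathbb{T}$) will then collapse these recurrences to $a_1 = 0$ together with $a_n = 0$ for $n \geq 3$, which is exactly the condition $\phi \in (z + z^3 H^\infty)^\perp$.

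The one calculation I will need is
\[
L_\phi(e_n) = \sum_{j=0}^{\infty} a_{2j+n+1}\, e_j \qquad (n \geq 0),
\]
obtained directly from $L_\phi = WPM_\phi J$ (the parity condition imposed by $W$ kills the odd powers). Substituting this into (c), $U^* L_\phi e_0 = L_\phi e_3$, will yield $a_{2j+3} = a_{2j+4}$ for all $j \geq 0$. Substituting into (a), after noting that $C_{z^2}U^2 e_n = e_{2n+4}$ and $C_{z^2} e_n = e_{2n}$, will yield $a_{2m+7} = a_{2m+1}$ for all $m \geq 0$ (with $m = j+n$). Substituting into (b), after $M_{z^3}C_{z^4}U e_n = e_{4n+7}$ and $M_{z^3}C_{z^4} e_n = e_{4n+3}$, will yield $a_{2m+6} = a_{2m+8}$ for all $m \geq 0$ (with $m = j+2n$).

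The finishing move is to apply Riemann--Lebesgue in the right order. First, the chain from (b), $a_6 = a_8 = a_{10} = \cdots$, must be $0$, so $a_{2k} = 0$ for every $k \geq 3$. Condition (c) then propagates this: $a_5 = a_6 = 0$, $a_7 = a_8 = 0$, and so on, killing every odd $a_n$ with $n \geq 5$. Condition (a), in the form $a_{2m+1} = a_{2m+7}$, now forces $a_1 = a_7 = 0$ and $a_3 = a_9 = 0$; finally $a_4 = a_3 = 0$ by one more application of (c). The coefficients $a_0$, $a_2$ and all negatively indexed ones remain unconstrained, as they should be.

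Translating back is immediate: for every $\psi \in H^\infty$ the function $z^3 \psi$ has Fourier support in frequencies $\geq 3$, hence $\langle \phi, z^3\psi\rangle = 0$, while $\langle \phi, z\rangle = a_1 = 0$, giving $\langle \phi, z + z^3 \psi\rangle = 0$ and thus $\phi \in (z + z^3 H^\infty)^\perp$. I expect no conceptual obstacle in this proof; the work is essentially careful index bookkeeping, and the one tactical point is to strike condition (b) with Riemann--Lebesgue first so that (c) and (a) can cascade in the correct direction.
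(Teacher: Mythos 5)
Your proposal is correct and takes essentially the same route as the paper: it applies Theorem \ref{chract2} to $L_\phi=WH_\phi$, extracts exactly the three coefficient identities $a_{2m+1}=a_{2m+7}$, $a_{2m+6}=a_{2m+8}$ and $a_{2j+3}=a_{2j+4}$ (the paper's \eqref{m1}--\eqref{m3}), and uses $a_n\to 0$ to force $a_1=0$ and $a_n=0$ for $n\geq 3$, leaving $a_0$, $a_2$ and the negative-index coefficients free. The only difference is the (harmless) order in which the decay argument is cascaded through the three relations.
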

\begin{proof}
Let the operator $L_\phi= WH_\phi$ be a slant H-Toeplitz operator. Then, by the Theorem \eqref{chract2}, the operator $B_\phi$ satisfies the following:
\begin{enumerate}
\item[(a)]${U^*} WH_\phi C_{z^2}U^2= WH_\phi C_{z^2}$
\item[(b)]$U^*WH_\phi M_{z^3}C_{z^4}= WH_\phi M_{z^3}C_{z^4}U$
\item[(c)]$U^*WH_\phi z^0= WH_\phi M_{z^3}z^0.$
\end{enumerate}
Taking $\phi(z)= \sum_{n= -\infty}^\infty a_nz^n \in L^\infty$ and by using the relation (a), we obtain
$\left\langle {U^*} WH_\phi C_{z^2}U^2 z^m, z^j \right\rangle = \left\langle WH_\phi C_{z^2} z^m, z^j \right\rangle$   which implies that 
$\left\langle H_\phi z^{2(m+2)}, z^{2(j+1)}\right\rangle = \left\langle H_\phi z^{2m}, z^{2j}\right\rangle.$  So, from the matrix representation 
of the operator $H_\phi$ it follows  that 
\begin{flalign} 
& \qquad \label{m1} a_{2m+2j+7} = a_{2m+2j+1}  \text { for all } m,j\geq 0.&
\end{flalign}
Again on using the relation (b), it follows that
\begin{flalign}
&\left\langle {U^*} WH_\phi M_{z^3}C_{z^4} z^m, z^j \right\rangle = \left\langle WH_\phi M_{z^3}C_{z^4}U z^m, z^j \right\rangle \text{ which implies that }&\nonumber\\
& \left\langle H_\phi z^{4m+3}, z^{2(j+1)}\right\rangle = \left\langle H_\phi z^{4m+7}, z^{2j}\right\rangle. \text{ Using matrix representation of  } &\nonumber\\
&\text{ the operator $H_\phi$ the above condition is equivalent to following: } \nonumber \\
& \qquad \label{m2} a_{4m+2j+6} = a_{4m+2j+8}   \text{ for all } m,j\geq 0.
\end{flalign}
Moreover, from the relation (c), it follows that $\left\langle W H_\phi M_{z^3}z^0, z^j\right\rangle = \left\langle U^*WH_\phi z^0, z^j\right\rangle$ and then $\left\langle H_\phi z^3, z^{2j}\right\rangle = \left\langle H_\phi z^0, z^{2(j+1)}\right\rangle.$ Therefore, using the matrix representation of the operator $H_\phi$ we obtain following relation \begin{flalign} 
\label{m3}&\qquad a_{2j+4}= a_{2j+3} \text{ for all } j\geq 0.&
\end{flalign}
On substituting $m,j=0,1,2,\dots$ in equations \eqref{m1}, \eqref{m2}, \eqref{m3}, we obtain \[
a_{2k-1}= a_{2k+5},\  a_{2k+1}=a_{2k+2} \text{ and } a_{2k+4}= a_{2k+6}, \quad  k\in \mathbb{N}.\]
This implies that $a_{1}= a_{n}$ for each $n\geq 3$. Since $a_n \to 0 \text { as }n \to \infty$, we get that $\phi(z)= \sum_{n=-\infty}^{0}a_nz^n+ a_2z^2$. Hence,  $\phi \in (z+z^3H^\infty)^\perp$.
\end{proof}\vspace{0.4cm}

 We can extend the notion of slant H-Toeplitz operator to the space $L^2$ by defining the operator $\breve{V_\phi}: L^2 \longrightarrow L^2$ such that $\breve{V_\phi}= WM_\phi K$, where $K: L^2 \longrightarrow L^2$ defined as $K_{e_{2n}}= e_n,\ K e_{2n+1}= e_{-n-1}$ and $W: L^2 \longrightarrow L^2$ as $W e_{2n}= e_n,\ W e_{2n+1}=0$ for each integer $n$. The same techniques  can be applied to prove the results for   $\breve{V_\phi}$.

 The notion of slant H-Toeplitz operator on $H^2$ can be further extended to generalised slant H-Toeplitz operators, which can be defined as the operator  $V_\phi^k \in \mathcal{B}(H^2)$ with the symbol $\phi \in L^\infty$ by $V_\phi^k(f)= W_kPM_\phi K(f)$ for each $f$ in $H^2$, where the operator $W_k \in \mathcal{B}(L^2)$ such that $W_k e_{2n}= e_n,\ W_k e_{2n+1}=0$ for each integer $n$. Clearly, for $k=2$, the operator $V_\phi^k$ is same as the slant H-Toeplitz operator $V_\phi$. Moreover, results for operator $V_\phi$ can be extended for the operator $V_\phi ^k$. 
\section*{Acknowledgement}
\noindent  Support of CSIR Research Grant to second author [F.No. 09/045(1405)\\
/2015-EMR-\textrm{I}] for carrying out the research work is fully acknowledged.

\bibliographystyle{amsplain}

\end{document}